\documentclass[11pt,a4paper]{article}
\usepackage[utf8]{inputenc}
\usepackage{amsmath,amssymb,amsthm}
\usepackage{graphicx}
\hoffset=1.25cm \voffset=0cm \setlength{\topmargin}{-0.4in}
\setlength{\textwidth}{15.5cm} \setlength{\oddsidemargin}{-0.4in}
\setlength{\evensidemargin}{-0.4in} \setlength{\textheight}{22cm}
\setlength{\parindent}{0pt} \scrollmode


\title{Approximation of discontinuous signals by sampling Kantorovich series}
         
\author{{\bf Danilo Costarelli}, \hskip0.6cm {\bf Anna Maria Minotti}, \\ and \hskip0.4cm {\bf Gianluca Vinti} \thanks{corresponding author}    \\    \\
   Department of Mathematics and Computer Science \\
            University of Perugia\\
        1, Via Vanvitelli, 06123 Perugia, Italy    \\  \\
  {\small {\tt danilo.costarelli@unipg.it} \hskip0.3cm {\tt annamaria.minotti@unipg.it}}\\
{\small and \hskip0.4cm {\tt gianluca.vinti@unipg.it}} }

\date{}

\newcommand{\mau}{\geq}
\newcommand{\miu}{\leq}
\newcommand{\ep}{\varepsilon}
\newcommand{\N}{\mathbb{N}}
\newcommand{\R}{\mathbb{R}}
\newcommand{\Z}{\mathbb{Z}}
\newcommand{\disp}{\displaystyle}
\newcommand{\be}{\begin{equation}}
\newcommand{\ee}{\end{equation}}

\newcommand{\lef}{\left\lfloor}
\newcommand{\rig}{\right\rfloor}


\newtheorem{definition}{Definition}[section]
\newtheorem{remark}[definition]{Remark}
\newtheorem{theorem}[definition]{Theorem}
\newtheorem{lemma}[definition]{Lemma}


%
\begin{document}

\maketitle  

\begin{abstract}
In this paper, the behavior of the sampling Kantorovich operators has been studied, when discontinuous signals are considered in the above sampling series. Moreover, the rate of approximation for the family of the above operators is estimated, when uniformly continuous and bounded signals are considered. Further, also the problem of the linear prediction by sampling values from the past is analyzed. At the end, the role of duration-limited kernels in the previous approximation processes has been treated, and several examples have been provided.

\vskip0.3cm
\noindent
  {\footnotesize AMS 2010 Mathematics Subject Classification: 41A25, 41A30, 47A58, 47B38, 94A12}
\vskip0.1cm
\noindent
  {\footnotesize Key words and phrases: sampling Kantorovich operators, discontinuous signals, jump discontinuities, order of approximation, linear prediction, duration-limited kernels.} 
\end{abstract}

\section{Introduction}

The present study is motivated by various reasons. First of all, we were inspired from the paper of Butzer, Ries and Stens \cite{BURIST1}, where the behavior of the generalized sampling operators, of the form:
$$
\mbox{(I)} \hskip1.8cm (G_w f)(t)\ :=\ \sum_{k \in \Z} f\left( \frac{k}{w} \right) \chi\left(wt-k\right), \hskip1cm t \in \R, \hskip0.8cm
$$ 
were analyzed at any point $t \in \R$, where the bounded signal $f$ defined on $\R$, presents a jump discontinuity. The function $\chi: \R \to \R$ is a suitable kernel, satisfying certain assumptions. The generalized sampling operators $G_w f$ have been introduced in the 1980s (see e.g. \cite{BAVI0,BAMA0,BURIST1,BUST2,RIST}) as an approximate version of the classical Whittaker-Kotelnikov-Shannon  sampling theorem which uses an $L^1$-kernel, see e.g. \cite{SHAN,BU1,HIG1,HIG2,HIST,ANVI,BABUSTVI0,BAKAVI,ANVI1,ANVI2,BABUSTVI2}.

   The sampling Kantorovich operators $S_w f$, first introduced in \cite{BABUSTVI} and studied in this paper (see Section \ref{sec2}), arise as a further development of the generalized sampling operators, where in place of the sample values $f\left( k/w \right)$ we have mean values of the signal $f$, of the form $w \int_{k/w}^{(k+1)/w}f(u)\, du$, $k \in \Z$, and $w>0$; it turns out that these operators reduce "time-jitter" errors, what is very useful in signal processing. 

  In view of the above connection between the operators $S_w f$ and $G_w f$, it is quite natural to ask what is the behavior of the sampling Kantorovich series when signals with discontinuities are considered. Moreover, in last years it has been proved that the sampling Kantorovich operators are very suitable for image reconstruction and enhancement (see e.g. \cite{COVI1,COVI2,CLCOMIVI1,CLCOMIVI2}); images are typical examples of discontinuous signals. This is an additional reason for which it would be interesting to study the behavior of the sampling Kantorovich operators at jump discontinuities. The latter problem will be the main topic investigated in the present paper. 

   First of all, we establish a representation formula (see Lemma \ref{lemma1}) for the sampling Kantorovich series evaluated at a certain fixed time $t$ where the signal $f$ has a jump discontinuity, exploiting an auxiliary function appropriately defined. Then, by using the above formula, we become able to obtain some necessary and sufficient conditions for the convergence of the family $(S_w f)_{w>0}$ to a suitable finite linear combination of $f(t+0)$ and $f(t-0)$, where $f(t+0)$ and $f(t-0)$ are respectively the right and left limit of $f$.

   The sampling Kantorovich operators, other than applications to image processing, have been largely studied also from the theoretical point of view. For instance, a nonlinear version of $S_w f$ has been studied in \cite{VIZA1,COVI2,VEVI1} both in continuous and Orlicz functions spaces, while a more general version of these operators have been considered in \cite{VIZA2,VIZA3}. We recall that, Orlicz spaces are very general spaces, including as a special case the $L^p$-spaces (see e.g. \cite{MUORL,MU1,BAMUVI}).  The rate of approximation for $S_w f$ has been investigated in \cite{BAMA,COVI3,COVI4,COVI5} both in univariate and multivariate settings. 

   In particular, Bardaro and Mantellini (\cite{BAMA}) proved an estimate for the order of approximation involving the modulus of continuity of the function being approximated, requiring that the discrete absolute moment of order $\beta \mau 1$ of the kernel used to construct the operators is finite (i.e. $m_{\beta}(\chi)<+\infty$, with $\beta \mau 1$). Since in general examples of kernels for which the discrete moment $m_{\beta}(\chi)=+\infty$, for $\beta \mau 1$, can be given, in the present paper we achieve an estimate applicable to sampling Kantorovich operators based upon those kernels. 

  Finally, we study the problem of the linear prediction of signals from sample values taken only from the past. This problem is important in real-word case studies, when, in order to reconstruct a given signal at time $t$, one knows the sample values only before the present time $t$. 

  In all the above mentioned problems, a crucial role is played by  the kernels used to construct the sampling Kantorovich operators. For this reason, a detailed discussion concerning the kernels is given at the end of the paper together with several examples. 


\section{Approximation of discontinuous signals}  \label{sec2}

First of all, we introduce the family of discrete operators studied in this paper. 

   In what follows, a function $\chi: \R \to \R$ will be called a {\em kernel} if it satisfies the following conditions:
\begin{itemize}

\item[$(\chi 1)$] $\chi \in L^1(\R)$ and it is bounded in $[-1, 1]$;

\item[$(\chi 2)$] for every $u \in \R$,
\be
\sum_{k \in \Z}\chi(u-k) = 1;
\ee

\item[$(\chi 3)$] for some $\beta>0$, the discrete absolute moment of order $\beta$ are finite, i.e.,
$$
m_{\beta}(\chi)\ :=\ \sup_{u \in \R} \sum_{k \in \Z}|\chi(u-k)|\, |u-k|^{\beta}\ <\ +\infty.
$$
\end{itemize}
For any kernel $\chi$, the sampling Kantorovich operators can be defined as follows:
$$
(S_w f)(t)\ :=\ \sum_{k \in \Z} \chi(wt-k) \left[ w \int_{k/w}^{(k+1)/w} f(u)\, du \right], \hskip0.8cm (t \in \R)
$$
where $f: \R \to \R$ is a locally integrable function such that the above series is convergent for each $t \in \R$.
\begin{remark} \rm \label{remark2.3}
Note that, since a kernel function satisfies conditions $(\chi 1)$ and $(\chi 3)$, it is possible to prove (see e.g. \cite{BABUSTVI,COVI1}) that the discrete absolute moment of order zero is finite, i.e., $m_0(\chi) < +\infty$.
\end{remark}
\begin{remark} \rm
We point out that, the boundedness on $[-1,1]$ required in assumption $(\chi 1)$ is a merely, non-restrictive, technical condition. From the practical point view, the above assumption is not restrictive since the main examples of kernels (that will we show in Section \ref{sec4}) are bounded on the whole $\R$.   
\end{remark}

   Clearly, it is easy to see that the operators $S_w f$ are well-defined for $f \in L^{\infty}(\R)$. Indeed, 
$$
|(S_w f)(t)|\ \miu\ \|f\|_{\infty}\, m_0(\chi)\ < +\infty, \hskip0.8cm t \in \R,
$$
where $\| \cdot \|_{\infty}$ denotes the usual sup-norm.
The pointwise and uniform convergence for the family of sampling Kantorovich operators have been proved in \cite{BABUSTVI} for continuous signals of one variable. More precisely, we have the following.
\begin{theorem}[\cite{BABUSTVI}] \label{th1}
Let $f:\R \to \R$ be a bounded function. Then,
$$
\lim_{w \to +\infty}(S_w f)(t) = f(t),
$$
at any point $t \in \R$ of continuity for $f$. Moreover, if $f$ is uniformly continuous, it turns out:
$$
\lim_{w \to +\infty} \| S_w f - f \|_{\infty}\ =\ 0.
$$
\end{theorem}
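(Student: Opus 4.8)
The plan is to build everything on the partition-of-unity identity $(\chi 2)$. Since $\sum_{k\in\Z}\chi(wt-k)=1$ for every $wt$, and since by Remark~\ref{remark2.3} one has $m_0(\chi)<+\infty$, the series $\sum_{k\in\Z}\chi(wt-k)\,w\!\int_{k/w}^{(k+1)/w}f(u)\,du$ and $f(t)\sum_{k\in\Z}\chi(wt-k)$ both converge absolutely whenever $f$ is bounded (their general terms are dominated in modulus by $\|f\|_\infty|\chi(wt-k)|$), so we may subtract them term by term and write
$$
(S_w f)(t)-f(t)\;=\;\sum_{k\in\Z}\chi(wt-k)\,w\!\int_{k/w}^{(k+1)/w}\bigl(f(u)-f(t)\bigr)\,du .
$$
From here the argument is the classical one: split the index set into the central block $\{k:\ |wt-k|\le w\delta/2\}$, where the whole sampling interval $[k/w,(k+1)/w]$ sits in a neighbourhood of $t$, and its complement, the tail, which is controlled by the decay of $\chi$ encoded in $(\chi 3)$.

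For the pointwise statement, fix a continuity point $t$ of $f$ and $\varepsilon>0$, and choose $\delta>0$ with $|f(u)-f(t)|<\varepsilon$ for $|u-t|<\delta$. If $|wt-k|\le w\delta/2$ and $u\in[k/w,(k+1)/w]$, then $|u-t|\le |u-k/w|+|k/w-t|\le 1/w+\delta/2<\delta$ as soon as $w>2/\delta$, so the central block contributes at most $\varepsilon\sum_{k\in\Z}|\chi(wt-k)|\le \varepsilon\, m_0(\chi)$. On the tail I would estimate $|f(u)-f(t)|\le 2\|f\|_\infty$ and invoke $(\chi 3)$ in the standard fashion,
$$
\sum_{|wt-k|>w\delta/2}|\chi(wt-k)|\;\le\;\Bigl(\tfrac{2}{w\delta}\Bigr)^{\!\beta}\sum_{k\in\Z}|\chi(wt-k)|\,|wt-k|^{\beta}\;\le\;\Bigl(\tfrac{2}{w\delta}\Bigr)^{\!\beta} m_\beta(\chi),
$$
which tends to $0$ as $w\to+\infty$. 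Combining the two bounds yields $\limsup_{w\to+\infty}|(S_wf)(t)-f(t)|\le\varepsilon\, m_0(\chi)$, and letting $\varepsilon\to0^+$ gives $(S_wf)(t)\to f(t)$.

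For the uniform convergence I would repeat the same computation verbatim, observing that uniform continuity of $f$ allows $\delta$ to be chosen independently of $t$; then every estimate above is uniform in $t\in\R$, so that $\|S_wf-f\|_\infty\le\varepsilon\, m_0(\chi)+2\|f\|_\infty\bigl(2/(w\delta)\bigr)^{\beta}$, and the conclusion follows exactly as before.

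The only step requiring a little care is the tail estimate: one must bound $\sum_{|wt-k|>w\delta/2}|\chi(wt-k)|$ \emph{uniformly} in $t$ and show it vanishes, and this is precisely what $(\chi 3)$ — together with Remark~\ref{remark2.3}, which also secures the absolute convergence used in the initial term-by-term rearrangement — is designed to deliver. Everything else is routine $\varepsilon$–$\delta$ bookkeeping, with no cancellation or finer structural property of $\chi$ needed.
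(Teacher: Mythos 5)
Your proof is correct and is essentially the standard argument: the paper itself states Theorem \ref{th1} without proof (it is quoted from \cite{BABUSTVI}), and the proof given there proceeds exactly as you do, via the partition-of-unity identity $(\chi 2)$, the near/far split of the index set, and the moment condition $(\chi 3)$ to kill the tail. The only blemish is a dropped factor of $m_\beta(\chi)$ in your final displayed bound for the uniform case, which is immaterial.
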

\noindent A multivariate version of the above theorem has been proved in \cite{COVI1}.

Note that, assumption $(\chi 1)$ on kernels functions $\chi$ is quite standard, condition $(\chi 3)$ can be easily deduced if $\chi(u) = {\cal O}(u^{-\beta - 1 - \ep})$, as $|u| \to + \infty$, for some $\ep >0$, while condition $(\chi 2)$ is in general difficult to check. For this reason, the following theorem can be useful.
\begin{theorem}[\cite{BURIST1}] \label{th_eq}
Let $\chi$ be a continuous kernel function. Then, the following two assertion are equivalent:
\begin{itemize}
\item[${\rm (I)}$] For every $u \in [0,1)$, \hskip0.5cm 
$$
\sum_{k \in \Z} \chi(u - k)\ =\ 1;
$$

\item[${\rm (II)}$] 
$$
\widehat{\chi}(2 \pi k)\ =\ \left\{
\begin{array}{l}
1,\ \hskip0.5cm k=0,\\
0,\ \hskip0.5cm k \in \Z\setminus\left\{ 0 \right\},
\end{array}
\right.
$$
where $\widehat{\chi}(v):= \int_{\R}\chi(u)\, e^{-iuv}\, du$, $v \in \R$, denotes the Fourier transform of $\chi$.
\end{itemize}
\end{theorem}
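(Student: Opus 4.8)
The plan is to prove the equivalence of (I) and (II) by means of the Poisson summation formula applied to the function $\chi$. The key observation is that the $1$-periodic function
$$
\Phi(u)\ :=\ \sum_{k \in \Z} \chi(u-k), \hskip0.8cm u \in \R,
$$
is well-defined (the series converges absolutely and uniformly on $\R$ by the finiteness of $m_0(\chi)$, which holds under $(\chi 1)$ and $(\chi 3)$ as noted in Remark \ref{remark2.3}, and $\Phi$ is continuous since $\chi$ is continuous) and that its Fourier coefficients are exactly the values $\widehat{\chi}(2\pi k)$, $k \in \Z$. First I would compute, for $k \in \Z$,
$$
c_k(\Phi)\ =\ \int_0^1 \Phi(u)\, e^{-2\pi i k u}\, du\ =\ \int_0^1 \sum_{j \in \Z} \chi(u-j)\, e^{-2\pi i k u}\, du,
$$
and then, justifying the interchange of sum and integral by the absolute convergence just mentioned, perform the substitution $v = u - j$ on each summand to collapse the sum of integrals over the intervals $[-j, 1-j]$ into a single integral over $\R$, obtaining $c_k(\Phi) = \int_{\R} \chi(v)\, e^{-2\pi i k v}\, dv = \widehat{\chi}(2\pi k)$.

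Once this is established, the equivalence follows quickly. Assertion (I) says that $\Phi(u) = 1$ for every $u \in [0,1)$, hence (by $1$-periodicity and continuity) $\Phi \equiv 1$ on $\R$; this is clearly equivalent to saying that all Fourier coefficients of $\Phi$ vanish except $c_0(\Phi) = 1$, i.e. to assertion (II). For the direction (II) $\Rightarrow$ (I) one needs that a continuous $1$-periodic function whose Fourier coefficients coincide with those of the constant function $1$ must itself equal $1$; this is standard (uniqueness of Fourier series for continuous periodic functions, or an appeal to the completeness of the trigonometric system together with continuity). For the direction (I) $\Rightarrow$ (II) one simply reads off the Fourier coefficients of the constant function.

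The main obstacle I anticipate is purely a matter of care rather than depth: namely, making the termwise integration in the computation of $c_k(\Phi)$ rigorous. This requires knowing that $\sum_{j \in \Z} \int_0^1 |\chi(u-j)|\, du < +\infty$, which is a consequence of $\chi \in L^1(\R)$ (it equals $\|\chi\|_{L^1(\R)}$ after the same change of variables), so Fubini--Tonelli applies on $[0,1) \times \Z$. A secondary point to be handled cleanly is the uniform convergence of $\sum_k \chi(u-k)$ to a continuous limit, which one gets from $m_0(\chi) < +\infty$ by a tail estimate; this guarantees $\Phi$ is genuinely continuous and $1$-periodic so that the uniqueness theorem for Fourier series is applicable. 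No genuinely hard analysis is involved; the proof is essentially the Poisson summation identity $\sum_{k} \chi(u-k) = \sum_{k} \widehat{\chi}(2\pi k)\, e^{2\pi i k u}$ read in the light that the left-hand side is constant if and only if only the $k=0$ term survives on the right.
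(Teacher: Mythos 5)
Your proposal is correct and follows essentially the same route as the paper, which explicitly derives this theorem as a consequence of the Poisson summation formula: you compute the Fourier coefficients of the periodization $\sum_{k}\chi(u-k)$, identify them with $\widehat{\chi}(2\pi k)$, and invoke uniqueness of Fourier series. The technical points you flag (termwise integration via $\sum_j \int_0^1 |\chi(u-j)|\,du = \|\chi\|_{L^1}$, and uniform convergence from $m_0(\chi)<+\infty$) are exactly the right ones and are handled correctly.
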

The proof of Theorem \ref{th_eq} follows as a consequence of the Poisson's summation formula, see e.g. \cite{BUNE,BURIST1,BUSPST}. Furthermore, it is easy to observe that (I) of Theorem \ref{th_eq} is equivalent to $(\chi 2)$ since the function $\sum_{k \in \Z}\chi(u-k)$ is 1-periodic. 

   Now, in order to investigate the behavior of the sampling Kantorovich series at points where a signal $f$ is discontinuous, we introduce some notations. 

   Let $f: \R \to \R$ and $t \in \R$ be fixed; we will denote by
$$
f(t+0)\ :=\ \lim_{\ep \to 0^+} f(t+\ep),\ \hskip0.2cm \mbox{and}\ \hskip0.2cm f(t-0)\ :=\ \lim_{\ep \to 0^+} f(t-\ep).
$$
Moreover, for the kernel $\chi$ we define the functions:
$$
\Psi^+_{\chi}(x)\, :=\, \sum_{k\, <\, x}\chi(x-k), \hskip0.7cm \Psi^-_{\chi}(x)\, :=\, \sum_{k\, >\, x}\chi(x-k), \hskip0.7cm (x \in \R).
$$
Under the previous assumptions, it is easy to see that $\Psi^+_{\chi}(x)$ and $\Psi^-_{\chi}(x)$ are periodic functions with period equal to one. Indeed:
$$
\Psi^+_{\chi}(x+1)\ =\ \sum_{k\, <\, x+1}\chi(x+1-k)\ =\ \sum_{k-1\, <\, x}\chi(x-(k-1))
$$
$$
=\ \sum_{\widetilde{k}\, <\, x}\chi(x-\widetilde{k})\ =\ \Psi^+_{\chi}(x), \hskip0.8cm (x \in \R),
$$
and the same computations can be made in case of $\Psi^-_{\chi}$.

  Moreover, assuming $\chi$ continuous, it is easy to prove that $\Psi^-_{\chi}$ is continuous from the right at the integers, and $\Psi^+_{\chi}$ is continuous from the left again at the integers, see e.g., \cite{BURIST1}.  

  Now, we first prove the following useful representation lemma for the operators $S_w f$.
\begin{lemma} \label{lemma1}
Let $f: \R \to \R$ be a bounded function, and $t \in \R$ be fixed. We define:
\be \label{g_t}
g_t(x)\ :=\ \left\{
\begin{array}{l}
f(x)-f(t-0), \hskip0.5cm x<t,\\
\\
0, \hskip2.9cm x=t, \\
\\
f(x)-f(t+0), \hskip0.5cm x>t.
\end{array}
\right.
\ee
There holds:
$$
(S_w f)(t)\ =\  (S_w g_t)(t)\,  +\, \left[ f(t+0) - f(t-0)  \right] \cdot \left[\Psi^-_{\chi}(w t) + \chi\left(w t - \lef wt\rig\right)  \right]
$$
$$
\hskip1cm +\, f(t-0)\, - \chi\left(w t - \lef wt \rig\right) \left( w t - \lef wt \rig \right) \left[ f(t+0) - f(t-0)  \right]
$$
if $wt \notin \Z$, $w>0$, where the symbol $\lef \cdot \rig$ denotes the integer part of a given number, while
$$
(S_w f)(t)\, =\, (S_w g_t)(t)\, +\, \left[ f(t+0) - f(t-0)  \right] \cdot \left[\Psi^-_{\chi}(w t) + \chi\left(0\right)  \right]\, +\, f(t-0),
$$
if $wt \in \Z$, $w>0$.
\end{lemma}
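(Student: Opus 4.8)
The plan is to expand $(S_w f)(t)$ directly from its definition and to split the sum over $k$ according to the position of $k$ relative to $wt$, inserting the decomposition $f = g_t + (\text{constant pieces})$ on each mean value $w\int_{k/w}^{(k+1)/w} f(u)\,du$. First I would treat the generic case $wt \notin \Z$. Write $n := \lef wt \rig$, so that $n < wt < n+1$. For each $k$, the interval $[k/w, (k+1)/w)$ is entirely to the left of $t$ when $k+1 \le n$, i.e.\ $k \le n-1$, entirely to the right of $t$ when $k \ge n+1$, and it is the unique "straddling" interval when $k = n$. On each non-straddling interval, $f$ agrees with $g_t$ plus the appropriate constant ($f(t-0)$ on the left intervals, $f(t+0)$ on the right intervals), so
$$
w\int_{k/w}^{(k+1)/w} f(u)\,du = w\int_{k/w}^{(k+1)/w} g_t(u)\,du + \begin{cases} f(t-0), & k \le n-1,\\ f(t+0), & k \ge n+1. \end{cases}
$$
For the straddling index $k=n$, I would split the integral at $u=t$: the part over $[n/w, t)$ contributes $g_t$ plus $f(t-0)$ times the length $t - n/w$, and the part over $(t,(n+1)/w)$ contributes $g_t$ plus $f(t+0)$ times the length $(n+1)/w - t$; note $w$ times these lengths sum to $1$, and $w(t - n/w) = wt - n = wt - \lef wt \rig$.

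Next I would reassemble. The $g_t$-pieces recombine (using that $\int$ is additive over the split at $t$, and $g_t(t)=0$ so the single point is harmless) into exactly $(S_w g_t)(t)$. The constant pieces give
$$
\sum_{k \le n-1} \chi(wt-k)\, f(t-0) \;+\; \sum_{k \ge n+1} \chi(wt-k)\, f(t+0) \;+\; \chi(wt-n)\Big[ f(t-0)\,(wt-n) + f(t+0)\,(1-(wt-n)) \Big].
$$
Here $\sum_{k \le n-1}\chi(wt-k) = \Psi^+_\chi(wt)$ since $k \le n-1 \iff k < wt$, and $\sum_{k \ge n+1}\chi(wt-k) = \Psi^-_\chi(wt)$ since $k \ge n+1 \iff k > wt$. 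Using $(\chi 2)$ in the form $\Psi^+_\chi(wt) + \chi(wt-n) + \Psi^-_\chi(wt) = 1$, I would eliminate $\Psi^+_\chi(wt)$ via $\Psi^+_\chi(wt) = 1 - \Psi^-_\chi(wt) - \chi(wt - \lef wt\rig)$, substitute, and collect the $f(t-0)$ and $f(t+0)$ coefficients. After routine algebra the coefficient of $f(t-0)$ becomes $1$ plus terms pairing with $f(t+0)-f(t-0)$; grouping everything multiplying $[f(t+0)-f(t-0)]$ and keeping the leftover $f(t-0)$ yields precisely the claimed formula, with the $\Psi^-_\chi(wt) + \chi(wt - \lef wt\rig)$ factor and the correction term $-\chi(wt-\lef wt\rig)(wt - \lef wt\rig)[f(t+0)-f(t-0)]$.

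For the case $wt \in \Z$, say $wt = n$, there is no straddling interval in the interior: the point $t = n/w$ is the left endpoint of the interval $[n/w,(n+1)/w)$, so on that interval $f$ equals $g_t + f(t+0)$ (the single endpoint $u=t$ being negligible in the integral), while intervals with $k \le n-1$ give $g_t + f(t-0)$ and with $k \ge n+1$ give $g_t + f(t+0)$. Thus the constant contribution is $f(t-0)\sum_{k \le n-1}\chi(n-k) + f(t+0)\sum_{k \ge n}\chi(n-k) = f(t-0)\,\Psi^+_\chi(n) + f(t+0)\,\big(\Psi^-_\chi(n) + \chi(0)\big)$, and using $\Psi^+_\chi(n) = 1 - \Psi^-_\chi(n) - \chi(0)$ gives $f(t-0) + [f(t+0)-f(t-0)]\,[\Psi^-_\chi(n) + \chi(0)]$, which is the second formula. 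The main thing to be careful about — the only real obstacle — is bookkeeping: correctly identifying which $k$'s index intervals lying left of, right of, or straddling $t$ in terms of $\lef wt\rig$, handling the straddling integral split cleanly, and tracking the algebra when substituting $\Psi^+_\chi$ in terms of $\Psi^-_\chi$ so that the correction term emerges with the right sign. Convergence of all the rearranged series is guaranteed by $m_0(\chi) < +\infty$ (Remark \ref{remark2.3}) together with boundedness of $f$, so no delicate interchange-of-limits issue arises.
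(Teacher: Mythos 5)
Your proposal is correct and is essentially the paper's own computation run in the opposite direction: the paper expands $(S_w g_t)(t)$ over the three index ranges $k<\lfloor wt\rfloor$, $k>\lfloor wt\rfloor$, $k=\lfloor wt\rfloor$ (splitting the straddling integral at $t$) and then solves for $(S_w f)(t)$, whereas you decompose $(S_w f)(t)$ directly; the bookkeeping and the use of $(\chi 2)$ are the same. One self-cancelling slip worth fixing: for $wt\notin\Z$ with $n=\lfloor wt\rfloor$ one has $k<wt \iff k\le n$, so $\sum_{k\le n-1}\chi(wt-k)=\Psi^+_{\chi}(wt)-\chi(wt-n)$ (not $\Psi^+_{\chi}(wt)$), and correspondingly $(\chi 2)$ reads $\Psi^+_{\chi}(wt)+\Psi^-_{\chi}(wt)=1$ at non-integer points rather than $\Psi^+_{\chi}(wt)+\chi(wt-n)+\Psi^-_{\chi}(wt)=1$; the two errors compensate, so the substitution you actually carry out, $\sum_{k\le n-1}\chi(wt-k)=1-\chi(wt-n)-\Psi^-_{\chi}(wt)$, is valid and the claimed formula follows.
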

\begin{proof}
First we consider the case $w t \notin \Z$, with $w>0$. Thus we can write:
$$
\hskip-1.2cm (S_w g_t)(t)\ =\ \sum_{k < \lef wt \rig} \chi(wt-k)\left[ w \int_{k/w}^{(k+1)/w} \left[ f(u)-f(t-0)  \right]\, du \right]\ 
$$
$$
\hskip0.5cm +\ \sum_{k > \lef wt \rig} \chi(wt-k)\left[ w \int_{k/w}^{(k+1)/w} \left[ f(u)-f(t+0)  \right]\, du \right]\ 
$$
$$
\hskip-0.7cm +\ \chi(wt-\lef wt \rig)\, w \left[ \int_{\lef wt \rig / w}^t \left[ f(u)-f(t-0)  \right]\, du \right.
$$
$$
\hskip-2.3cm +\ \left.   \int_t^{(\lef wt \rig+1)/w} \left[ f(u)-f(t+0)  \right]\, du \right]
$$
$$
=\ \sum_{k < \lef wt \rig} \chi(wt-k)\left[ w \int_{k/w}^{(k+1)/w} f(u)\, du \right] - f(t-0)\sum_{k < \lef wt \rig} \chi(wt-k)
$$
$$
+\ \sum_{k > \lef wt \rig} \chi(wt-k)\left[ w \int_{k/w}^{(k+1)/w} f(u)\, du \right] - f(t+0)\sum_{k > \lef wt \rig} \chi(wt-k)
$$
$$
+\ \chi(wt-\lef wt \rig)\, w \left[\int_{\lef wt \rig / w}^{(\lef wt \rig+1)/w} f(u)\, du \right] 
$$
$$
-\ \chi(wt-\lef wt \rig)\, w \left[  f(t-0) \left( t-\frac{\lef wt \rig}{w}  \right) +\ f(t+0) \left( \frac{\lef wt \rig + 1}{w} - t \right)\right]
$$
$$
=\ (S_w f)(t)\, - f(t-0)\sum_{k < \lef wt \rig} \chi(wt-k) - f(t+0)\sum_{k > \lef wt \rig} \chi(wt-k)
$$
$$
-\ \chi(wt-\lef wt \rig) \left[  f(t-0) \left( wt- \lef wt \rig \right) +\ f(t+0) \left( \lef wt \rig + 1 - wt \right) \right].
$$
Rearranging all the above terms, and by adding then subtracting the term $f(t-0)\sum_{k \mau \lef wt \rig} \chi(wt-k)$ we obtain:
$$
(S_w f)(t)\ =\ (S_w g_t)(t) + f(t-0) \sum_{k \in \Z}\chi(wt-k) - f(t-0)\sum_{k > \lef wt \rig}\chi(wt-k)
$$
$$
-\ f(t-0)\, \chi(wt-\lef wt \rig) + f(t+0)\sum_{k > \lef wt \rig}\chi(wt-k) + f(t+0)\, \chi(wt-\lef wt \rig)
$$
$$
+\ \chi(wt-\lef wt \rig) \left[  f(t-0) \left( wt- \lef wt \rig \right) +\ f(t+0) \left( \lef wt \rig - wt \right) \right].
$$
Now, since $wt$ is not an integer, it is easy to note that:
$$
\Psi^-_{\chi}(wt)\ =\ \sum_{k> \lef wt \rig}\chi(wt - k),
$$
moreover, by using condition $(\chi 2)$ we finally have:
$$
(S_w f)(t)\ =\ (S_w g_t)(t) + f(t-0) + \Psi^-_{\chi}(wt) \left[  f(t+0) - f(t-0) \right]
$$
$$
+\ \chi(wt-\lef wt \rig)\, \left[  f(t+0) - f(t-0) \right] 
$$
$$
-\ \chi(wt-\lef wt \rig)\, \left( wt-\lef wt \rig  \right) \left[  f(t+0) - f(t-0) \right]. 
$$
While, if $wt \in \Z$, $w>0$, we can repeat the above computations, splitting the operator $(S_w g_t)(t)$ as follows:
$$
\hskip-1.2cm (S_w g_t)(t)\ =\ \sum_{k < wt} \chi(wt-k)\left[ w \int_{k/w}^{(k+1)/w} \left[ f(u)-f(t-0)  \right]\, du \right]\ 
$$
$$
\hskip0.5cm +\ \sum_{k \mau wt } \chi(wt-k)\left[ w \int_{k/w}^{(k+1)/w} \left[ f(u)-f(t+0)  \right]\, du \right].
$$
Hence, rearranging all terms, by adding then subtracting again the term $f(t-0)\sum_{k \mau  wt } \chi(wt-k)$, and by using condition $(\chi 2)$ we can obtain: 
$$
(S_w f)(t)\ =\ (S_w g_t)(t) + f(t-0) + \left[ f(t+0) - f(t-0)  \right]\sum_{k \mau wt}\chi(wt-k)
$$
$$
=\ (S_w g_t)(t)\, +\, f(t-0)\, +\, \left[ f(t+0) - f(t-0)  \right]\, \left[  \Psi^-_{\chi}(wt)\,  +\,  \chi(0) \right].
$$
\end{proof}
\begin{remark} \rm
The representation formula obtained in Lemma \ref{lemma1} it holds for every bounded signal $f$, and for signals having removable discontinuities, the two cases $w t \in \Z$ and $w t \notin \Z$ coincide. 
\end{remark}
We are now ready to study the behavior of the sampling Kantorovich series at jump discontinuity.
\begin{theorem} \label{th2}
Let $f: \R \to \R$ be a bounded signal with a (non removable) jump discontinuity at $t \in \R$, and let $\alpha \in \R$. Then, the following assertions are equivalent:
\begin{itemize}
\item[(i)] $\displaystyle \lim_{\stackrel{\displaystyle w \to +\infty}{\displaystyle wt \in \Z}} (S_wf)(t)\ =\ [\, \alpha + \chi(0)\, ]\, f(t+0) + [\, 1-\alpha - \chi(0)\, ]\, f(t-0)$;

\item[(ii)] $ \Psi^-_{\chi}(0)\ =\ \alpha$; 

\item[(iii)] $ \Psi^+_{\chi}(0)\ =\ 1-\alpha-\chi(0)$;
\end{itemize}
\end{theorem}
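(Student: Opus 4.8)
The plan is to reduce everything to the representation formula of Lemma~\ref{lemma1} together with Theorem~\ref{th1}. First I would specialize the $wt\in\Z$ case of Lemma~\ref{lemma1}: since $\Psi^-_{\chi}$ has period one, $\Psi^-_{\chi}(wt)=\Psi^-_{\chi}(0)$ whenever $wt\in\Z$, so that for every $w>0$ with $wt\in\Z$ one has $(S_w f)(t)=(S_w g_t)(t)+f(t-0)+[f(t+0)-f(t-0)]\,[\Psi^-_{\chi}(0)+\chi(0)]$. Next I would observe that the auxiliary function $g_t$ defined in \eqref{g_t} is bounded (because $f$ is) and continuous at the point $t$ itself, since $\lim_{x\to t^-}g_t(x)=f(t-0)-f(t-0)=0=g_t(t)$ and likewise from the right; hence Theorem~\ref{th1} yields $(S_w g_t)(t)\to g_t(t)=0$ as $w\to+\infty$, and \emph{a fortiori} along the (unbounded) set of $w$ with $wt\in\Z$. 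Consequently the limit appearing in (i) always exists and equals the constant $f(t-0)+[f(t+0)-f(t-0)]\,[\Psi^-_{\chi}(0)+\chi(0)]=[\Psi^-_{\chi}(0)+\chi(0)]\,f(t+0)+[1-\Psi^-_{\chi}(0)-\chi(0)]\,f(t-0)$.

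For the equivalence (i)$\,\Leftrightarrow\,$(ii) I would subtract the right-hand side of (i) from the constant just obtained; after cancellation the difference collapses to $[\Psi^-_{\chi}(0)-\alpha]\,[f(t+0)-f(t-0)]$. Because the discontinuity at $t$ is assumed non-removable, $f(t+0)\neq f(t-0)$, so this quantity vanishes if and only if $\Psi^-_{\chi}(0)=\alpha$, which is exactly (ii). Thus (i) holds precisely when (ii) does.

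For (ii)$\,\Leftrightarrow\,$(iii) I would simply evaluate condition $(\chi 2)$ at $u=0$, i.e. $\sum_{k\in\Z}\chi(-k)=1$; this series converges absolutely since $m_0(\chi)<+\infty$ (Remark~\ref{remark2.3}), so it may be regrouped as $\sum_{k<0}\chi(-k)+\chi(0)+\sum_{k>0}\chi(-k)=\Psi^+_{\chi}(0)+\chi(0)+\Psi^-_{\chi}(0)=1$. Hence $\Psi^-_{\chi}(0)=\alpha$ is equivalent to $\Psi^+_{\chi}(0)=1-\alpha-\chi(0)$, closing the cycle of equivalences.

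No step presents a genuine obstacle; once Lemma~\ref{lemma1} is available the argument is essentially bookkeeping. The two points deserving care are: (a) verifying that $g_t$ is continuous at $t$, so that Theorem~\ref{th1} applies and $(S_w g_t)(t)\to0$ — this is where one really uses that the jump has been "subtracted off" in the definition of $g_t$; and (b) noting that the non-removability hypothesis $f(t+0)\neq f(t-0)$ is precisely what makes the passage from the analytic statement (i) about limits of the operators to the arithmetic identity (ii) on the kernel reversible.
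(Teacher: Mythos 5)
Your proposal is correct and follows essentially the same route as the paper's proof: the representation formula of Lemma~\ref{lemma1} in the case $wt\in\Z$, the continuity of $g_t$ at $t$ combined with Theorem~\ref{th1} to kill the term $(S_w g_t)(t)$, the $1$-periodicity of $\Psi^-_{\chi}$, the non-removability hypothesis to divide by $f(t+0)-f(t-0)$, and condition $(\chi 2)$ evaluated at $u=0$ for the equivalence of (ii) and (iii). The only (harmless) difference is organizational: you establish up front that the limit in (i) always exists and equals $[\Psi^-_{\chi}(0)+\chi(0)]f(t+0)+[1-\Psi^-_{\chi}(0)-\chi(0)]f(t-0)$, whereas the paper argues the two implications (i)$\Rightarrow$(ii) and (ii)$\Rightarrow$(i) separately with the same ingredients.
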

\begin{proof}
$(i)\Rightarrow(ii)$ From Lemma \ref{lemma1} we have:
\be \label{condizione}
(S_w f)(t) = (S_w g_t)(t) + \left[ f(t+0) - f(t-0)  \right] \cdot [\Psi^-_{\chi}(w t) + \chi(0)] + f(t-0),
\ee
for any $w>0$, such that $wt \in \Z$. Now, by definition, it turns out that $g_t$ is bounded, continuous in $t$, and $g_t(t)=0$, then from the assumptions and by Theorem \ref{th1} we have: 
$$
\hskip-3cm [\, \alpha\, +\, \chi(0)\, ]\, f(t+0)\, +\, [\,1 - \alpha-\chi(0)\,]\, f(t-0)\ =
$$
$$
=\ f(t-0)\, +\, \left[ f(t+0) - f(t-0)  \right]\, \left[\, \chi(0)\, +\, \lim_{\stackrel{\displaystyle w \to +\infty}{\displaystyle wt \in \Z}}  \Psi^-_{\chi}(w t)  \right].
$$ 
Since $f$ has a jump discontinuity in $t$, we have $f(t+0)-f(t-0) \neq 0$, then:
$$
\lim_{\stackrel{\displaystyle w \to +\infty}{\displaystyle wt \in \Z}}  \Psi^-_{\chi}(w t)\ =\ \alpha.
$$
Now, recalling that the function $\Psi^-_{\chi}$ is periodic with period equal to one, it turns out that:
$$
\lim_{\stackrel{\displaystyle w \to +\infty}{\displaystyle wt \in \Z}}  \Psi^-_{\chi}(w t)\ =\ \Psi^-_{\chi}(0)\ =\ \alpha.
$$
$(ii)\Rightarrow(i)$ Since $\Psi^-_{\chi}$ is 1-periodic, we have $\Psi^-_{\chi}(w t)\ =\ \Psi^-_{\chi}(0)\ =\ \alpha$, for every $w>0$ such that $wt \in \Z$. Then, by applying Theorem \ref{th1} again, and equality (\ref{condizione}) we easily obtain:
$$
\displaystyle \lim_{\stackrel{\displaystyle w \to +\infty}{\displaystyle wt \in \Z}} (S_wf)(t)\ =\ [\, \alpha\, +\, \chi(0)\, ]\, f(t+0)\, +\, [1-\alpha-\chi(0)\, ]\, f(t-0).
$$
Finally, by the assumption $(\chi 2)$ on the kernels, we can write:
$$
1\ =\ \sum_{k \in \Z} \chi(-k)\ =\ \Psi^-_{\chi}(0)\ +\ \chi(0)\ +\ \Psi^+_{\chi}(0),
$$
then immediately follows that (ii) is equivalent to (iii). This completes the proof.
\end{proof}
\begin{remark} \rm
We point out that in the case $\chi(0)=0$, (i) of Theorem \ref{th2} becomes:
$$
\lim_{\stackrel{\displaystyle w \to +\infty}{\displaystyle wt \in \Z}} (S_wf)(t)\ =\ \alpha\, f(t+0)\, +\, (1-\alpha )\, f(t-0),
$$
i.e., the sampling Kantorovich series converge to a value which does not depend by the behavior of the kernel at zero.
\end{remark}

   In what follows, when referring to the assertion (i) of Theorem \ref{th2}, we speak of convergence of the sampling Kantorovich series at jump discontinuities. 

  Now, in order to study the above problem when $w>0$ is such that $wt \notin \Z$, an additional assumption on $\chi$ must be assumed. In Section \ref{sec4}, we will show a class of kernel functions satisfying this assumption, and moreover, in Theorem \ref{th-zeros} we prove that the above additional assumption on $\chi$ cannot be dropped. We have the following.  
\begin{theorem} \label{th-bis}
Let $f: \R \to \R$ be a bounded signal with a (non removable) jump discontinuity at $t \in \R\setminus\left\{ 0 \right\}$, and let $\alpha \in \R$. Suppose in addition that the kernel function $\chi$ satisfies the following condition:
\be \label{strong}
\chi(u) = 0,\ \hskip0.8cm for\ every\ \hskip0.8cm x \in (0,1).
\ee
Then, the following assertions are equivalent:
\begin{itemize}
\item[(i)] $\displaystyle \lim_{\stackrel{\displaystyle w \to +\infty}{\displaystyle wt \notin \Z}} (S_wf)(t)\ =\ \alpha\, f(t+0)\, +\, (1-\alpha)\, f(t-0)$;

\item[(ii)] $ \Psi^-_{\chi}(x)\ =\ \alpha$, \hskip0.5cm for every  \hskip0.5cm $x \in (0, 1)$; 

\item[(iii)] $ \Psi^+_{\chi}(x)\ =\ 1-\alpha$ \hskip0.5cm for every  \hskip0.5cm $x \in (0, 1)$.
\end{itemize}
\end{theorem}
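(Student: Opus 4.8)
The plan is to push everything through the representation formula of Lemma~\ref{lemma1} in the case $wt\notin\Z$, using the extra hypothesis (\ref{strong}) to annihilate the two terms that carry the factor $\chi(wt-\lef wt\rig)$. Indeed, when $wt\notin\Z$ the number $wt-\lef wt\rig$ is the fractional part of $wt$ and lies in $(0,1)$, so (\ref{strong}) gives $\chi(wt-\lef wt\rig)=0$ and Lemma~\ref{lemma1} collapses to
$$
(S_wf)(t)\ =\ (S_wg_t)(t)\ +\ \left[f(t+0)-f(t-0)\right]\Psi^-_{\chi}(wt)\ +\ f(t-0),
$$
for every $w>0$ with $wt\notin\Z$. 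Since $f$ has a genuine jump at $t$, the auxiliary function $g_t$ of (\ref{g_t}) is bounded, continuous at $t$, and $g_t(t)=0$; hence Theorem~\ref{th1} yields $(S_wg_t)(t)\to g_t(t)=0$ as $w\to+\infty$. Thus, along $wt\notin\Z$, the behaviour of $(S_wf)(t)$ is governed entirely by that of $\Psi^-_{\chi}(wt)$.

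For $(i)\Rightarrow(ii)$ I would assume (i), solve the displayed identity for $\Psi^-_{\chi}(wt)$, and let $w\to+\infty$ with $wt\notin\Z$: using $(S_wg_t)(t)\to 0$ and $f(t+0)-f(t-0)\neq 0$ one obtains that $\Psi^-_{\chi}(wt)\to\alpha$. Here is the point that forces $t\neq 0$: since $\Psi^-_{\chi}$ is $1$-periodic, $\Psi^-_{\chi}(wt)=\Psi^-_{\chi}(wt-\lef wt\rig)$, and for each fixed $x\in(0,1)$ the equation $wt-\lef wt\rig=x$ is solved by $w=(n+x)/t$ for infinitely many integers $n$, hence by arbitrarily large $w>0$; evaluating the limit along such a sequence gives $\Psi^-_{\chi}(x)=\alpha$, which is (ii). The converse $(ii)\Rightarrow(i)$ is immediate: if $\Psi^-_{\chi}\equiv\alpha$ on $(0,1)$, then $\Psi^-_{\chi}(wt)=\alpha$ for every $w>0$ with $wt\notin\Z$, so letting $w\to+\infty$ in the displayed identity and using again $(S_wg_t)(t)\to 0$ produces exactly the limit in (i).

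The equivalence $(ii)\Leftrightarrow(iii)$ does not use (\ref{strong}) at all: for $x\in(0,1)$ no integer equals $x$, so $\sum_{k\in\Z}\chi(x-k)=\Psi^+_{\chi}(x)+\Psi^-_{\chi}(x)$, and condition $(\chi 2)$ gives $\Psi^+_{\chi}(x)+\Psi^-_{\chi}(x)=1$ for every such $x$; hence $\Psi^-_{\chi}\equiv\alpha$ on $(0,1)$ is equivalent to $\Psi^+_{\chi}\equiv 1-\alpha$ on $(0,1)$. The only genuinely delicate step is the density argument in $(i)\Rightarrow(ii)$, where one must pass from the mere existence of $\lim_{w\to+\infty}\Psi^-_{\chi}(wt)$ to the pointwise identification of the $1$-periodic function $\Psi^-_{\chi}$ on the whole of $(0,1)$; this is precisely where the assumption $t\in\R\setminus\{0\}$ is used, and it explains why the case $wt\in\Z$ treated in Theorem~\ref{th2} is genuinely different. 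Everything else is a routine substitution into Lemma~\ref{lemma1}.
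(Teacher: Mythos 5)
Your proposal is correct and follows essentially the same route as the paper: collapse Lemma~\ref{lemma1} via (\ref{strong}) since $wt-\lef wt\rig\in(0,1)$ when $wt\notin\Z$, use Theorem~\ref{th1} on $g_t$ to kill $(S_wg_t)(t)$, identify $\lim\Psi^-_{\chi}(wt)$ with $\Psi^-_{\chi}(x)$ for each $x\in(0,1)$ by $1$-periodicity (the paper writes this as $\lim_{n\to+\infty}\Psi^-_{\chi}(x+n)=\Psi^-_{\chi}(x)$, you make the choice $w=(n+x)/t$ explicit, which is the same point and is where $t\neq 0$ enters), and get $(ii)\Leftrightarrow(iii)$ from $(\chi 2)$. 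No gaps.
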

\begin{proof}
First of all, we can observe that under assumption (\ref{strong}) on $\chi$, by Lemma \ref{lemma1} we have:
\be \label{fuffa}
(S_w f)(t)\, =\, (S_w g_t)(t)\, +\, \left[ f(t+0) - f(t-0)  \right] \cdot \Psi^-_{\chi}(w t) \, +\, f(t-0),
\ee
for every $w>0$ such that $wt \notin \Z$, where $g_t$ is the function defined in (\ref{g_t}). Now, by using (\ref{fuffa}) it is immediate to prove that (ii) implies (i) by letting $w \to +\infty$ with the restriction $wt \notin \Z$, using Theorem \ref{th1}, and since, as in the proof of Theorem \ref{th2}, $\Psi^-_{\chi}$ is 1-periodic. 

\noindent Conversely, using (\ref{fuffa}) again, and since $f(t+0) \neq f(t-0),$ we can obtain:
$$
\lim_{\stackrel{\displaystyle w \to +\infty}{\displaystyle wt \notin \Z}}\Psi^-_{\chi}(wt)\ =\ \alpha,
$$
or, equivalently for every $x \in (0,1)$ and $n \in \N$
$$
\lim_{n \to +\infty}\Psi^-_{\chi}(x+n)\ =\ \Psi^-_{\chi}(x) =\ \alpha,
$$
since $\Psi^-_{\chi}$ is 1-periodic. Therefore (i) and (ii) are equivalent. 

\noindent The equivalence between (ii) and (iii) can be established easily since $\Psi^+_{\chi}$ and $\Psi^-_{\chi}$ are 1-periodic and by using condition $(\chi 2)$, we have:
$$
1\ =\ \sum_{k \in \Z} \chi(wt - k)\ =\ \Psi^+_{\chi}(wt)\ +\ \Psi^-_{\chi}(wt)\ =\ \Psi^+_{\chi}(x)\ +\ \Psi^-_{\chi}(x),
$$
for every $x$ in $(0,1)$.
\end{proof}
The results showed in Theorem \ref{th-bis}, have been proved by using the condition in (\ref{strong}), which could seem to be a quite strong assumption on the kernel $\chi$.

    Actually, it is possible to see that, even if condition (ii) of Theorem \ref{th-bis} is fulfilled, but $\chi$ does not satisfy (\ref{strong}), the sampling Kantorovich series cannot converge at jump discontinuities.
\begin{theorem} \label{th-zeros}
Let $\chi$ be a kernel which is not identically null on $(0,1)$. Suppose in addition that $\chi$ satisfies condition (ii) of Theorem \ref{th-bis} with $\alpha \in \R$.
Moreover, let $f: \R \to \R$ be a bounded signal with a (non removable) jump discontinuity at $t \in \R\setminus\left\{ 0 \right\}$. 

   Then, the family of the sampling Kantorovich operators based upon $\chi$, cannot converge (pointwise) at $t$.
\end{theorem}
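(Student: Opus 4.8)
The plan is to read the behaviour of $(S_wf)(t)$ off the representation formula of Lemma~\ref{lemma1} in the case $wt\notin\Z$, after simplifying it by means of hypothesis (ii) of Theorem~\ref{th-bis} and the $1$-periodicity of $\Psi^-_\chi$. First I would note that, writing $\langle x\rangle:=x-\lef x\rig$, for $wt\notin\Z$ one has $\langle wt\rangle\in(0,1)$, hence $\Psi^-_\chi(wt)=\Psi^-_\chi(\langle wt\rangle)=\alpha$; collecting in Lemma~\ref{lemma1} the two terms carrying the factor $\chi(\langle wt\rangle)$, the formula reduces to
$$(S_wf)(t)=(S_wg_t)(t)+\bigl[f(t+0)-f(t-0)\bigr]\bigl[\alpha+\chi(\langle wt\rangle)\bigl(1-\langle wt\rangle\bigr)\bigr]+f(t-0),$$
valid for every $w>0$ with $wt\notin\Z$, where $g_t$ is the auxiliary function in (\ref{g_t}). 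Since $g_t$ is bounded, continuous at $t$ and $g_t(t)=0$, Theorem~\ref{th1} gives $(S_wg_t)(t)\to0$ as $w\to+\infty$. As $f(t+0)-f(t-0)\neq0$ (the jump is non-removable), this shows that $(S_wf)(t)$ converges along a sequence $w_n\to+\infty$ with $w_nt\notin\Z$ \emph{if and only if} $\Phi(\langle w_nt\rangle)$ converges, where $\Phi(s):=\chi(s)(1-s)$ for $s\in(0,1)$, the two limits being affinely related.

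The heart of the argument will be to show that $\Phi$ is \emph{not constant} on $(0,1)$. For this I would argue by contradiction: if $\Phi\equiv c$ on $(0,1)$, then $\chi(s)=c/(1-s)$ there, which is unbounded as $s\to1^-$ unless $c=0$; but $\chi$ is bounded on $[-1,1]\supset(0,1)$ by $(\chi 1)$, so $c=0$, and then $\chi\equiv0$ on $(0,1)$, contrary to the hypothesis that $\chi$ is not identically null there. Hence one can fix $s_0,s_1\in(0,1)$ with $\Phi(s_0)\neq\Phi(s_1)$. Next I would check the elementary fact that every $s\in(0,1)$ is realized as the fractional part $\langle wt\rangle$ along a sequence $w\to+\infty$ with $wt\notin\Z$: taking $w_n:=(m_n+s)/t$ with integers $m_n$ of the same sign as $t$ (which is nonzero) and $|m_n|\to+\infty$ gives $w_n>0$, $w_n\to+\infty$, $w_nt=m_n+s\notin\Z$ and $\lef w_nt\rig=m_n$, so $\langle w_nt\rangle=s$.

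Combining these ingredients, suppose $(S_wf)(t)$ converged to some $L\in\R$ as $w\to+\infty$. Evaluating the displayed identity along the sequences associated with $s_0$ and with $s_1$, and using $(S_{w_n}g_t)(t)\to0$, would give
$$\bigl[f(t+0)-f(t-0)\bigr]\bigl[\alpha+\Phi(s_0)\bigr]+f(t-0)=L=\bigl[f(t+0)-f(t-0)\bigr]\bigl[\alpha+\Phi(s_1)\bigr]+f(t-0),$$
forcing $\Phi(s_0)=\Phi(s_1)$ after dividing by $f(t+0)-f(t-0)\neq0$ — a contradiction. Hence $(S_wf)(t)$ cannot converge, which is the assertion. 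I expect the main (indeed essentially the only) obstacle to be the non-constancy of $\Phi$ on $(0,1)$: this is exactly where the boundedness of $\chi$ on $[-1,1]$, specifically its behaviour near the endpoint $1$, is used in an essential way; the extraction of prescribed fractional parts and the passage to the limit in the remaining terms are routine.
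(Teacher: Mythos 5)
Your proof is correct and follows essentially the same route as the paper's: argue by contradiction via the representation formula of Lemma~\ref{lemma1}, reduce to the statement that $\chi(x)(1-x)$ is constant on $(0,1)$, and rule this out using the boundedness of $\chi$ on $[-1,1]$ (for a nonzero constant) and the non-nullity hypothesis (for the zero constant). The only difference is that you make explicit, via the sequences $w_n=(m_n+s)/t$, why every fractional part $s\in(0,1)$ is attained as $w\to+\infty$ --- a step the paper passes over silently and which is precisely where the hypothesis $t\neq 0$ enters.
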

\begin{proof}
Suppose by contradiction that:
$$
\displaystyle \lim_{\stackrel{\displaystyle w \to +\infty}{\displaystyle wt \notin \Z}} (S_wf)(t)\ =\ \ell,
$$
for some $\ell \in \R$. Then by the uniqueness of the limit, and by using Lemma \ref{lemma1} and Theorem \ref{th1} we have:
$$
\ell\ =\ \left[ f(t+0) - f(t-0)  \right] \cdot  \left\{ \lim_{\stackrel{\displaystyle w \to +\infty}{\displaystyle wt \notin \Z}}\left[\Psi^-_{\chi}(w t) + \chi\left(w t - \lef wt\rig\right)  \right]  \right\}
$$
$$
+\, f(t-0)\, - \left[ f(t+0) - f(t-0)  \right] \cdot \left\{ \lim_{\stackrel{\displaystyle w \to +\infty}{\displaystyle wt \notin \Z}} \chi\left(w t - \lef wt \rig\right) \cdot \left( w t - \lef wt \rig \right) \right\}.
$$
Now, by assumption (ii) of Theorem \ref{th-bis}, and noting that, for every $w>0$ with $wt \notin \Z$, we have $w t - \lef wt \rig = x \in (0,1)$, we can write what follows:
$$
\hskip-3cm \ell\ =\ \left[ f(t+0) - f(t-0)  \right] \cdot \left[\, \alpha + \chi\left(x\right) \right]\, +\, f(t-0)
$$
$$
\hskip0.2cm  -\ \left[ f(t+0) - f(t-0)  \right]\cdot \chi(x) \cdot x, \hskip1cm \mbox{for every} \hskip1cm x \in (0,1).
$$
Since $f(t+0) - f(t-0) \neq 0$, we can easily obtain:
$$
\chi(x) \cdot (1-x)\ =\ \frac{\ell - f(t-0)}{f(t+0) - f(t-0)}\ -\ \alpha, \hskip0.5cm \mbox{for every} \hskip0.5cm x \in (0,1),
$$
i.e.,
$$
\chi(x)\ =\ \left[\frac{\ell - f(t-0)}{f(t+0) - f(t-0)}\ -\ \alpha\right]\, \cdot \frac{1}{1-x}, \hskip0.5cm \mbox{for every} \hskip0.3cm x \in (0,1),
$$
and this represents a contradiction. Indeed, if 
$$
\frac{\ell - f(t-0)}{f(t+0) - f(t-0)}\ -\ \alpha\ =:\ C \neq 0,
$$
it results that $\chi$ is unbounded on $(0,1)$, hence it fails to satisfy condition $(\chi 1)$ on the kernel functions. While, if $C=0$ it turns out that $\chi(x)=0$ for every $x \in (0,1)$, that is again a contradiction; thus the theorem is proved.
\end{proof}
Finally, the following general theorem can be deduced.
\begin{theorem} \label{th3}
Let $f: \R \to \R$ be a bounded signal with a (non removable) jump discontinuity at $t \in \R$, and let $\alpha \in \R$. Suppose in addition that the kernel function $\chi$ satisfies the following condition:
\be \label{strong2}
\chi(u) = 0,\ \hskip0.8cm for\ every\ \hskip0.8cm x \in [0,1).
\ee
Then, the following assertions are equivalent:
\begin{itemize}
\item[(i)] $\displaystyle \lim_{w \to +\infty} (S_wf)(t)\ =\ \alpha\, f(t+0)\, +\, (1-\alpha)\, f(t-0)$;

\item[(ii)] $ \Psi^-_{\chi}(x)\ =\ \alpha$, \hskip0.5cm for every  \hskip0.5cm $x \in [0, 1)$; 

\item[(iii)] $ \Psi^+_{\chi}(x)\ =\ 1-\alpha$ \hskip0.5cm for every  \hskip0.5cm $x \in [0, 1)$.
\end{itemize}
If we assume in addition that $\chi$ is continuous on $\R$, assertions (i), (ii), and (iii) are also equivalent to the following:
\begin{itemize}
\item[(iv)] 
$\displaystyle \int_{-\infty}^0\chi(u)\, e^{-i u 2 \pi k}\, du\ =\ 
\left\{
\begin{array}{l}
\alpha, \hskip0.5cm k=0, \\
0, \hskip0.5cm k \in \Z\setminus \left\{  0 \right\},\end{array}\right.
$

\item[(v)] 
$\displaystyle \int^{+\infty}_0\chi(u)\, e^{-i u 2 \pi k}\, du\ =\ \left\{
\begin{array}{l}
1-\alpha, \hskip0.5cm k=0, \\
0, \hskip1.2cm k \in \Z\setminus \left\{  0 \right\},\end{array}\right.
$
\end{itemize}
\end{theorem}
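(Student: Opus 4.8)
The plan is to obtain the chain (i)$\Leftrightarrow$(ii)$\Leftrightarrow$(iii) by merging the two already‑established results that treat the sub‑families $wt\in\Z$ and $wt\notin\Z$ separately, and then to pass from (ii), (iii) to (iv), (v) by a Poisson‑summation argument of exactly the kind used for Theorem~\ref{th_eq}. The point to keep in mind throughout is that (\ref{strong2}) is strictly stronger than (\ref{strong}): it contains both the hypothesis (\ref{strong}) of Theorem~\ref{th-bis} and the extra requirement $\chi(0)=0$. The latter is what makes Theorem~\ref{th2} available in its simplified ($\chi(0)=0$) form, what lets the two regimes fuse over the single interval $[0,1)$, and what will later make the one‑sided truncations of $\chi$ continuous.

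First I would split the limit. For $t\neq 0$, as $w\to+\infty$ both $wt\in\Z$ and $wt\notin\Z$ occur for arbitrarily large $w$, hence $\lim_{w\to+\infty}(S_wf)(t)=L$ holds iff both restricted limits equal $L$. Since $\chi(0)=0$, Theorem~\ref{th2} (in the form recorded in the remark following it) gives $\lim_{wt\in\Z}(S_wf)(t)=\alpha f(t+0)+(1-\alpha)f(t-0)$ $\iff$ $\Psi^-_{\chi}(0)=\alpha$ $\iff$ $\Psi^+_{\chi}(0)=1-\alpha$; and since (\ref{strong2}) implies (\ref{strong}) and $t\neq 0$, Theorem~\ref{th-bis} gives $\lim_{wt\notin\Z}(S_wf)(t)=\alpha f(t+0)+(1-\alpha)f(t-0)$ $\iff$ $\Psi^-_{\chi}(x)=\alpha$ for all $x\in(0,1)$ $\iff$ $\Psi^+_{\chi}(x)=1-\alpha$ for all $x\in(0,1)$. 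Conjoining the two characterizations turns the pair of conditions into the single condition on $[0,1)$, i.e.\ (ii), resp.\ (iii); this yields (i)$\Leftrightarrow$(ii)$\Leftrightarrow$(iii). (The equivalence (ii)$\Leftrightarrow$(iii) can also be read off directly: for $x\in[0,1)$, $(\chi 2)$ together with $\chi(0)=0$ gives $\Psi^+_{\chi}(x)+\Psi^-_{\chi}(x)=1$.)

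For the Fourier characterization, assume $\chi$ continuous and set $\chi^-(u):=\chi(u)$ for $u<0$, $\chi^-(u):=0$ for $u\geq 0$, and $\chi^+:=\chi-\chi^-$. Because $\chi$ is continuous with $\chi(0)=0$, both $\chi^\pm$ are continuous on $\R$; they also inherit $(\chi 1)$ and $(\chi 3)$, so $m_0(\chi^\pm)\leq m_0(\chi)<+\infty$ (Remark~\ref{remark2.3}) and the periodizations $\sum_{k\in\Z}\chi^\pm(\cdot-k)$ converge uniformly to continuous $1$‑periodic functions. A routine index manipulation, reconciled at the endpoint $x=0$ by $\chi(0)=0$, shows that for every $x\in[0,1)$
$$
\Psi^-_{\chi}(x)=\sum_{k\in\Z}\chi^-(x-k),\qquad \Psi^+_{\chi}(x)=\sum_{k\in\Z}\chi^+(x-k),
$$
and by $1$‑periodicity these identities hold for all $x\in\R$. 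Hence (ii) says the periodization of $\chi^-$ is identically $\alpha$, and (iii) says the periodization of $\chi^+$ is identically $1-\alpha$. Computing Fourier coefficients and interchanging sum and integral (legitimate since $\chi^\pm\in L^1(\R)$), the $k$‑th coefficient of $\sum_j\chi^-(\cdot-j)$ is $\int_{\R}\chi^-(u)e^{-iu2\pi k}\,du=\int_{-\infty}^0\chi(u)e^{-iu2\pi k}\,du$, and similarly for $\chi^+$ one gets $\int_0^{+\infty}\chi(u)e^{-iu2\pi k}\,du$. Arguing exactly as in the proof of Theorem~\ref{th_eq} (Poisson summation), but with the constant $\alpha$, resp.\ $1-\alpha$, in place of $1$, a continuous $1$‑periodic function equals that constant iff all its Fourier coefficients vanish except the $0$‑th, which equals the constant. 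This gives (ii)$\Leftrightarrow$(iv) and (iii)$\Leftrightarrow$(v).

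The difficulty here is not conceptual but bookkeeping, concentrated in two places. One is the index identity $\Psi^\pm_{\chi}(x)=\sum_k\chi^\pm(x-k)$ on $[0,1)$: one must track exactly which integer indices enter $\sum_{k<x}$ and $\sum_{k>x}$ as $x$ runs over $[0,1)$, and it is precisely at $x=0$ that $\chi(0)=0$ (hence the closed bracket in (\ref{strong2})) is genuinely used, as well as for the continuity of $\chi^\pm$. The other is making the Fourier step rigorous — i.e.\ that $\chi^\pm\in L^1$, continuity, and $m_0(\chi^\pm)<+\infty$ together force the periodization to be a continuous function whose uniformly convergent Fourier series identifies it pointwise with a constant. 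Finally, one should flag that $t\neq 0$ is needed in order to invoke Theorem~\ref{th-bis}: when $t=0$ one has $wt=0\in\Z$ for every $w>0$, the regime $wt\notin\Z$ is vacuous, and (i) then only encodes $\Psi^-_{\chi}(0)=\alpha$; so the statement is to be read for $t\neq 0$ (or that case treated on its own).
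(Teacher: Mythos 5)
Your proof is correct and follows essentially the same route as the paper's: the equivalence of (i)--(iii) is obtained by combining Theorem~\ref{th2} and Theorem~\ref{th-bis} over the two regimes $wt\in\Z$ and $wt\notin\Z$, and (iv), (v) come from Poisson summation applied to the one-sided truncations of $\chi$ (the paper works only with the left truncation $\chi_0$ and then derives (iv)$\Leftrightarrow$(v) from Theorem~\ref{th_eq}, a cosmetic difference from your direct treatment of $\chi^+$). Your additional observations --- that $\chi(0)=0$ is what fuses the two regimes into the single condition on $[0,1)$ and keeps the truncations continuous, and that $t=0$ must be excluded or handled separately because Theorem~\ref{th-bis} requires $t\neq 0$ --- are legitimate refinements of the paper's one-line first step.
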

\begin{proof}
The equivalence among (i), (ii) and (iii) can be established easily as a consequence of Theorem \ref{th2} and Theorem \ref{th-bis}.

\noindent Let now $\chi$ be continuous on $\R$. We can prove $(ii) \Leftrightarrow (iv)$. Setting:
$$
\chi_0(x):= \left\{
\begin{array}{l}
\chi(x), \hskip0.8cm \mbox{for}\ x<0,\\
0, \hskip1.35cm \mbox{for}\ x \mau 0,
\end{array}
\right.
$$
then it results $\Psi^-_{\chi}(x) = \sum_{k \in \Z} \chi_0(x-k)$ is a 1-periodic, continuous function on $[0,1)$. Now, by the Poisson's summation formula (see e.g., \cite{BUNE,BAMUVI}) its Fourier expansion is given by:
$$
\Psi^-_{\chi}(x)\ =\ \sum_{k \in \Z} \widehat{\chi_0}\, (2k\pi)\, e^{i 2 k \pi x}\ =\ \sum_{k \in \Z}  \left[  \int_{-\infty}^0 \chi(u)\, e^{-i 2 k \pi u} \, du    \right]\, e^{i 2 k \pi x}.
$$
Therefore $\Psi^-_{\chi}(x) = \alpha$ for every $x \in [0,1)$ if and only if its Fourier series reduces to the term $k=0$, and this term is equal to $\alpha$. The above fact implies the equivalence between (ii) and (iv).

\noindent Finally, the equivalence between (iv) and (v) follows immediately from Theorem \ref{th_eq}. This completes the proof.
\end{proof}

  Clearly, also when we refer to (i) of Theorem \ref{th-bis} and Theorem \ref{th3} we speak of convergence of the sampling Kantorovich series at jump discontinuities.

  Finally, we can consider the case in which the discontinuity at the point $t$ is removable, i.e., if $f(t+0)= f(t-0) = \ell$. In the latter case, it is easy to prove the following.
\begin{theorem} \label{th2.7}
Let $f: \R \to \R$ be a bounded signal with a removable discontinuity at $t \in \R$, i.e., $f(t+0)= f(t-0) = \ell$. Then,
$$
\lim_{w \to +\infty} (S_w f)(t)\ =\ \ell.
$$
\end{theorem}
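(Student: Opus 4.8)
The plan is to deduce the statement directly from the representation Lemma~\ref{lemma1} and the basic convergence Theorem~\ref{th1}. The crucial point is that the removable-discontinuity hypothesis $f(t+0)=f(t-0)=\ell$ makes the factor $\bigl[f(t+0)-f(t-0)\bigr]$ that multiplies all the jump-dependent terms in \emph{both} branches of Lemma~\ref{lemma1} equal to zero. Consequently, whether $wt\in\Z$ or $wt\notin\Z$, every term involving $\Psi^-_{\chi}(wt)$, $\chi(wt-\lfloor wt\rfloor)$, or $\chi(0)$ disappears, and the representation formula collapses, for all $w>0$ without restriction, to
$$
(S_w f)(t)\ =\ (S_w g_t)(t)\ +\ \ell ,
$$
where $g_t$ is the auxiliary function introduced in (\ref{g_t}).

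It then remains to control $(S_w g_t)(t)$ as $w\to+\infty$. As already noted in the proof of Theorem~\ref{th2}, the function $g_t$ is bounded (since $f$ is bounded) and, by its very construction, continuous at the point $t$ with $g_t(t)=0$: indeed $\lim_{x\to t^-}g_t(x)=f(t-0)-f(t-0)=0$, $\lim_{x\to t^+}g_t(x)=f(t+0)-f(t+0)=0$, and $g_t(t)=0$. Hence Theorem~\ref{th1}, applied to $g_t$ at its point of continuity $t$, yields $\lim_{w\to+\infty}(S_w g_t)(t)=g_t(t)=0$.

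Combining the two steps gives $\lim_{w\to+\infty}(S_w f)(t)=0+\ell=\ell$, which is the assertion. There is really no obstacle here once Lemma~\ref{lemma1} is available; the argument is a one-line algebraic cancellation followed by an invocation of Theorem~\ref{th1}. The only point deserving a brief comment is that, because of the removable hypothesis, the cancellation takes place simultaneously in the two cases $wt\in\Z$ and $wt\notin\Z$ (this is exactly the coincidence observed in the remark following Lemma~\ref{lemma1}), so that the simplified identity for $(S_w f)(t)$ holds for every $w>0$ and the limit may be taken without any side condition on $w$.
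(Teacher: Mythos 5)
Your argument is correct and is precisely the paper's own proof: both reduce the representation of Lemma~\ref{lemma1} to $(S_w f)(t)=(S_w g_t)(t)+\ell$ for every $w>0$ (since the factor $f(t+0)-f(t-0)$ annihilates all the jump-dependent terms in both branches) and then apply Theorem~\ref{th1} to $g_t$, which is bounded and continuous at $t$ with $g_t(t)=0$. Your write-up merely spells out the cancellation and the continuity check that the paper leaves implicit.
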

\begin{proof}
The proof follows immediately noting that, by Lemma \ref{lemma1},
$$
(S_w f)(t)\ =\ (S_w g_t)(t)\ +\ \ell,
$$
for every $w>0$, and applying Theorem \ref{th1}.
\end{proof}
Note that, Theorem \ref{th2.7} holds without any additional assumptions on the kernel functions.


\section{Linear prediction and order of approximation} \label{sec3}

 The problem of the order of approximation for the sampling Kantorovich series, has been largely studied in various papers, see e.g. \cite{BAMA,COVI3,COVI4,COVI5}, both for functions of one and several variables. The approach used in the papers \cite{COVI3,COVI4,COVI5} involves functions belonging to suitable Lipschitz classes (see e.g., \cite{BAMUVI}) in the space of continuous functions and in Orlicz spaces. While, the estimates established in \cite{BAMA} were given for continuous functions only, and by employing the modulus of continuity of the function being approximated. We recall that, the modulus of continuity of a given uniformly continuous function $f: \R \to \R$ is defined by:
\be
\omega(f,\, \delta)\ :=\  \sup\left\{ |f(x)-f(y)|:\ x,y \in \R,\ |x-y| \miu \delta \right\},
\ee
$\delta>0$. It is well-known that, for any positive constant $\lambda>0$, the modulus of continuity satisfies the following useful property (see e.g., \cite{BUNE}):
\be \label{property_modulus}
\omega(f, \lambda\, \delta)\ \miu\ (\lambda + 1) \cdot \omega(f, \delta).
\ee
Now, we recall the following result.
\begin{theorem}[\cite{BAMA}] \label{th_bama}
Let $\chi$ be a kernel satisfying condition $(\chi 3)$ with $\beta \mau 1$. Then, for any uniformly continuous and bounded function $f: \R \to \R$, there exists a positive constant $C>0$ such that:
$$
\| S_w f - f\|_{\infty}\ \miu C\, \omega(f, w^{-1}),  \hskip0.8cm w>0.
$$
\end{theorem}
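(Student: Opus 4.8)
The plan is to start from the elementary identity furnished by condition $(\chi 2)$: since $\sum_{k \in \Z}\chi(wt-k)=1$ for every $t \in \R$ and $w>0$, one may write
$$
(S_wf)(t)-f(t)\ =\ \sum_{k \in \Z}\chi(wt-k)\left[w\int_{k/w}^{(k+1)/w}\bigl(f(u)-f(t)\bigr)\,du\right],
$$
whence, passing the absolute value inside,
$$
|(S_wf)(t)-f(t)|\ \miu\ \sum_{k \in \Z}|\chi(wt-k)|\cdot w\int_{k/w}^{(k+1)/w}|f(u)-f(t)|\,du .
$$
The interchange of series and integral, and the finiteness of everything appearing, are guaranteed by $m_0(\chi)<+\infty$ (Remark \ref{remark2.3}) together with the boundedness of $f$.

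Next I would control $|f(u)-f(t)|$ on the $k$-th sampling cell. By definition and monotonicity of the modulus of continuity, $|f(u)-f(t)|\miu\omega(f,|u-t|)$; and for $u\in[k/w,(k+1)/w]$ one has $|u-t|\miu\max\{|k/w-t|,|(k+1)/w-t|\}\miu(|wt-k|+1)/w$. Applying property (\ref{property_modulus}) with $\delta=w^{-1}$ and $\lambda=|wt-k|+1$ then gives $\omega(f,|u-t|)\miu(|wt-k|+2)\,\omega(f,w^{-1})$, uniformly in $u$ on that cell. Substituting and performing the now-trivial integration yields
$$
|(S_wf)(t)-f(t)|\ \miu\ \omega(f,w^{-1})\sum_{k \in \Z}|\chi(wt-k)|\,(|wt-k|+2)\ =\ \omega(f,w^{-1})\Bigl[\,\textstyle\sum_{k}|\chi(wt-k)|\,|wt-k|\ +\ 2\sum_{k}|\chi(wt-k)|\,\Bigr].
$$

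It then remains to bound the two sums uniformly in $t$. The second is $\miu m_0(\chi)<+\infty$ at once. For the first I would invoke the monotonicity of the discrete absolute moments: splitting the sum according to whether $|wt-k|<1$ or $|wt-k|\mau1$, and using $|wt-k|\miu|wt-k|^{\beta}$ in the second range (legitimate precisely because $\beta\mau1$), one gets $\sum_{k}|\chi(wt-k)|\,|wt-k|\miu m_0(\chi)+m_{\beta}(\chi)<+\infty$, i.e. $m_1(\chi)<+\infty$. Taking the supremum over $t \in \R$ one concludes $\|S_wf-f\|_{\infty}\miu C\,\omega(f,w^{-1})$ with $C:=m_1(\chi)+2m_0(\chi)$, a constant independent of $w$.

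The only genuinely delicate point — and the reason $(\chi 3)$ is phrased with $\beta\mau1$ rather than $\beta=1$ — is exactly this last interpolation step: one must know that finiteness of $m_\beta(\chi)$ for some $\beta\mau1$ forces $m_1(\chi)<+\infty$, which itself leans on $m_0(\chi)<+\infty$ from Remark \ref{remark2.3}. Everything else is routine bookkeeping: the use of $(\chi 2)$ to insert $f(t)$, the crude cell estimate $|u-t|\miu(|wt-k|+1)/w$, and the linear growth bound (\ref{property_modulus}) for $\omega(f,\cdot)$.
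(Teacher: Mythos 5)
Your argument is correct and complete. Note, however, that the paper does not actually prove Theorem \ref{th_bama}: it is imported from \cite{BAMA} and stated without proof, so there is no in-paper argument to match yours against. What you have written is the standard (and presumably the cited) proof for the case $\beta \mau 1$: insert $f(t)$ via $(\chi 2)$, bound $|f(u)-f(t)|$ on each cell by $\omega(f,(|wt-k|+2)w^{-1})$ using (\ref{property_modulus}), and reduce everything to the finiteness of $m_1(\chi)$, which you correctly deduce from $m_0(\chi)<+\infty$ and $m_\beta(\chi)<+\infty$ by splitting at $|wt-k|=1$; the resulting constant $C=m_1(\chi)+2m_0(\chi)$ is independent of $w$, as required. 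It is instructive to contrast this with the proof the paper \emph{does} give, namely that of Theorem \ref{th_beta} for $0<\beta<1$: there $m_1(\chi)$ may be infinite, so your interpolation step is unavailable, and the authors instead split the sum at $|wx-k|\miu w/2$, use $\omega(f,|u-x|)\miu\omega(f,|u-x|^{\beta})$ on the near range (where $|u-x|\miu 1$), and fall back on the crude bound $2\|f\|_{\infty}$ on the far range; this is exactly what produces the extra additive term $2^{\beta+1}\|f\|_{\infty}m_{\beta}(\chi)w^{-\beta}$ and the weaker rate $\omega(f,w^{-\beta})$ in that theorem. Your route buys the clean rate $\omega(f,w^{-1})$ precisely because $\beta\mau 1$ makes $m_1(\chi)$ finite; the paper's route in Theorem \ref{th_beta} buys applicability to kernels (such as $\chi_1,\chi_2,\chi_3$) for which $m_{\beta}(\chi)=+\infty$ whenever $\beta\mau 1$.
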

We stress that, the estimate provided in Theorem \ref{th_bama} holds only when $\beta$ of condition $(\chi 3)$ is not less than one. However, there exist examples of kernels for which the discrete absolute moments of order $\beta \mau 1$ are not finite, but at the same time, condition $(\chi 3)$ is satisfied for some values $0< \beta <1$. In the latter case, Theorem \ref{th_bama} cannot be applied. For this reason, we prove the following.
\begin{theorem} \label{th_beta}
Let $\chi$ be a kernel satisfying condition $(\chi 3)$ with $0< \beta < 1$. Then, for any uniformly continuous and bounded function $f: \R \to \R$, we have:
$$
| (S_wf)(x) - f(x) |\ \miu\ \omega \left( f,\, w^{-\beta} \right)\cdot \left[m_{\beta}(\chi) + 2 m_{0}(\chi) \right]\ +\ 2^{\beta + 1}\, \| f\|_{\infty}\, m_{\beta}(\chi)\, w^{-\beta} ,
$$
for every $x \in \R$, and $w>0$ sufficiently large.
\end{theorem}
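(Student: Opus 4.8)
The plan is to reduce the estimate to a single series by means of the partition‑of‑unity property $(\chi 2)$. Since $\sum_{k\in\Z}\chi(wx-k)=1$ and $w\int_{k/w}^{(k+1)/w}f(x)\,du=f(x)$, one can write
$$
(S_wf)(x)-f(x)\ =\ \sum_{k\in\Z}\chi(wx-k)\,w\int_{k/w}^{(k+1)/w}\big[f(u)-f(x)\big]\,du,
$$
and hence, by the triangle inequality,
$$
|(S_wf)(x)-f(x)|\ \le\ \sum_{k\in\Z}|\chi(wx-k)|\;w\int_{k/w}^{(k+1)/w}|f(u)-f(x)|\,du .
$$
For $u\in[k/w,(k+1)/w]$ one has $|u-x|\le|u-k/w|+|k/w-x|\le(1+|wx-k|)/w$, so the inner integral is bounded both by $\omega\!\big(f,(1+|wx-k|)/w\big)$ and by $2\|f\|_\infty$; I would use the first bound for indices $k$ near $wx$ and the second for those far away.

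Concretely, I would split $\Z=A\cup B$ with $A:=\{k:\ |wx-k|\le w/2\}$ and $B:=\{k:\ |wx-k|>w/2\}$. On $B$, bounding the integral by $2\|f\|_\infty$ and noting that $|wx-k|>w/2$ gives $(2|wx-k|/w)^\beta>1$, one gets $\sum_{k\in B}|\chi(wx-k)|\le 2^{\beta}w^{-\beta}\sum_{k\in\Z}|\chi(wx-k)|\,|wx-k|^{\beta}\le 2^{\beta}w^{-\beta}m_{\beta}(\chi)$, so the $B$‑part contributes at most $2^{\beta+1}\|f\|_\infty m_{\beta}(\chi)w^{-\beta}$, which is exactly the second term of the claim. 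On $A$, I would use $|f(u)-f(x)|\le\omega\!\big(f,(1+|wx-k|)/w\big)$ and then the sublinearity property (\ref{property_modulus}) with $\delta=w^{-\beta}$ and $\lambda=(1+|wx-k|)/w^{1-\beta}$, obtaining $\omega\!\big(f,(1+|wx-k|)/w\big)\le\big(1+\tfrac{1+|wx-k|}{w^{1-\beta}}\big)\omega(f,w^{-\beta})$. Summing over $A$ then leaves three pieces times $\omega(f,w^{-\beta})$: $\sum_A|\chi(wx-k)|$, $w^{-(1-\beta)}\sum_A|\chi(wx-k)|$, and $w^{-(1-\beta)}\sum_A|\chi(wx-k)|\,|wx-k|$.

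The step I expect to be the main obstacle — and the reason $(\chi 3)$ is needed here only for an exponent $0<\beta<1$, rather than $\beta\ge1$ as in Theorem \ref{th_bama} — is the third of these sums: estimating it naively would produce $m_{1}(\chi)$, which may be infinite. The remedy is that on $A$ we have $|wx-k|\le w$, and therefore
$$
\frac{|wx-k|}{w^{1-\beta}}\ =\ |wx-k|^{\beta}\left(\frac{|wx-k|}{w}\right)^{1-\beta}\ \le\ |wx-k|^{\beta},
$$
so that $w^{-(1-\beta)}\sum_A|\chi(wx-k)|\,|wx-k|\le m_{\beta}(\chi)$; similarly $w^{-(1-\beta)}\sum_A|\chi(wx-k)|\le m_{0}(\chi)$ as soon as $w\ge1$, and $\sum_A|\chi(wx-k)|\le m_{0}(\chi)$, where $m_{0}(\chi)<+\infty$ by Remark \ref{remark2.3}. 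Collecting, the $A$‑part is at most $[\,m_{\beta}(\chi)+2m_{0}(\chi)\,]\,\omega(f,w^{-\beta})$, and adding the $B$‑part yields precisely the asserted inequality, uniformly in $x\in\R$ and for all $w$ large enough (indeed $w\ge1$ suffices for the elementary estimates above). Note that only boundedness of $f$ and the definition of its modulus of continuity are used, and the constant is explicit.
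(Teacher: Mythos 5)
Your proof is correct and follows essentially the same route as the paper's: the same partition-of-unity reduction, the same split at $|wx-k|\le w/2$, and the identical tail estimate yielding $2^{\beta+1}\,\|f\|_\infty\, m_\beta(\chi)\, w^{-\beta}$. The only (cosmetic) difference is in the near part, where you apply the sublinearity of $\omega$ with $\lambda=(1+|wx-k|)/w^{1-\beta}$ together with the bound $|wx-k|/w^{1-\beta}\le |wx-k|^\beta$ on $A$, whereas the paper first passes to $\omega(f,|u-x|^{\beta})$ and invokes the subadditivity of $|\cdot|^{\beta}$; both land on the same constant $m_{\beta}(\chi)+2m_{0}(\chi)$.
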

\begin{proof}
Let $x \in \R$ be fixed. Now we can write what follows:
$$
\hskip-3.3cm |(S_wf)(x) - f(x)|\ =\ \left| (S_wf)(x) - f(x) \sum_{k \in \Z}\chi(wx-k) \right|
$$
$$
\hskip2.4cm \miu\ \sum_{k \in \Z} \left[ w \int^{(k+1)/w}_{k/w} |f(u)-f(x)|\, du \right] \cdot \left| \chi(wx-k) \right|,
$$
for $w>0$. Now, we split the above series as follows:
$$
\hskip-4cm \sum_{|wx-k| \miu w/2} \left[ w \int^{(k+1)/w}_{k/w} |f(u)-f(x)|\, du \right] \cdot \left| \chi(wx-k) \right|
$$
$$
+\ \sum_{|wx-k| > w/2}\left[ w \int^{(k+1)/w}_{k/w} |f(u)-f(x)|\, du \right] \cdot \left| \chi(wx-k) \right|\ =:\ I_1\, +\, I_2.
$$
Before estimating $I_1$, we first observe that, for every $u \in [k/w, (k+1)/w]$, and if $|wx-k| \miu w/2$ we have:
$$
|u-x|\ \miu\ |u-(k/w)|+|(k/w)-x|\ \miu\ (1/w) + (1/2)\ \miu\ 1,
$$
for every $w>0$ sufficiently large, and moreover, since $0 < \beta < 1$, it is also easy to see that:
$$
\omega(f,\, |u-x|)\ \miu\ \omega(f,\, |u-x|^{\beta}).
$$
Hence, by using property (\ref{property_modulus}) we can obtain:
$$
I_1\ \miu\ \sum_{|wx-k| \miu w/2} \left[ w \int^{(k+1)/w}_{k/w} \omega(f,\, |u-x|^{\beta})\, du \right] \cdot \left| \chi(wx-k) \right|
$$
$$
\miu\ \sum_{|wx-k| \miu w/2} \left[ w \int^{(k+1)/w}_{k/w} \left( w^{\beta}|u-x|^{\beta} + 1 \right)\, \omega(f,\, w^{-\beta})\, du \right] \cdot \left| \chi(wx-k) \right|
$$
$$
\miu\ \omega(f,\, w^{-\beta}) \left[ \sum_{|wx-k| \miu w/2}\left( w \int^{(k+1)/w}_{k/w}w^{\beta}|u-x|^{\beta}\, du \right)\left| \chi(wx-k) \right| \right.
$$
$$
+\ \left. \sum_{|wx-k| \miu w/2}\left| \chi(wx-k) \right| \right]\ =:\ \omega(f,\, w^{-\beta})\, \left[ I_{1,1}\, +\ I_{1,2}\right|.
$$
Concerning $I_{1,1}$, exploiting the sub-additivity of $|\cdot|^{\beta}$, with $0<\beta<1$, we have:
$$
I_{1,1}\ \miu\ \sum_{|wx-k| \miu w/2}\left( w^{\beta}\, \max_{u \in [k/w,\, (k+1)/w]}|u-x|^{\beta} \right)\left| \chi(wx-k) \right|
$$
$$
\miu\ \sum_{|wx-k| \miu w/2}w^{\beta}\, \max\left\{  |(k/w)-x|^{\beta};\, |(k+1)/w - x|^{\beta} \right\}\,  \left| \chi(wx-k) \right|
$$
$$
\miu\ \sum_{|wx-k| \miu w/2}w^{\beta}\, \max\left\{  |(k/w)-x|^{\beta};\, |(k/w) - x|^{\beta} + (1/w)^{\beta} \right\}\,  \left| \chi(wx-k) \right|
$$
$$
=\ \sum_{|wx-k| \miu w/2}w^{\beta}\, \left[  |(k/w) - x|^{\beta} + w^{-\beta} \right]\,  \left| \chi(wx-k) \right|
$$
$$
\miu\ \sum_{|wx-k| \miu w/2}|wx-k|^{\beta}\left| \chi(wx-k) \right|\ +\ \sum_{|wx-k| \miu w/2}\left| \chi(wx-k) \right|
$$
$$
\miu\ m_{\beta}(\chi)\, +\, m_{0}(\chi)\ <\ +\infty,
$$
in view of condition $(\chi 3)$ and what observed in Remark \ref{remark2.3}.
Moreover, follows easily that also $I_{1,2} \miu m_{0}(\chi)$. Finally, concerning $I_2$ we have:
$$
I_2\ \miu\ 2 \| f\|_{\infty} \sum_{|wx-k| > w/2}\left| \chi(wx-k) \right| \miu\ 2 \| f\|_{\infty} \sum_{|wx-k| > w/2}\frac{|wx-k|^{\beta}}{|wx-k|^{\beta}}\left| \chi(wx-k) \right|
$$
$$
\miu\ 2^{\beta + 1}\, \frac{\| f\|_{\infty}}{w^{\beta}}\sum_{|wx-k| > w/2} |wx-k|^{\beta}\, \left| \chi(wx-k) \right|\ \miu\ 2^{\beta + 1}\, \| f\|_{\infty} w^{-\beta} m_{\beta}(\chi)\ <\ +\infty.
$$
This completes the proof.
\end{proof}
After estimating the order of approximation, we investigate the problem of the linear prediction of signals, by sample values taken only from the past. In fact, the sampling Kantorovich series, in order to approximate a signal at a fixed time $t$, involve sample values taken both from the past and the future with respect to $t$. Clearly, in practice this is not possible since a signal is known only from the past with respect to the time $t$. The above problem can be solved as follows.
\begin{theorem} \label{th_prediction}
Let $\chi$ be a kernel with compact support, such that $supp\, \chi \subset (0,+\infty)$. Then, for every signal $f: \R \to \R$ for which the operators $S_w f$, $w>0$, are well-defined, and for every fixed $t \in \R$, we have:
$$
(S_w f)(t)\ =\ \sum_{k/w\, <\, t} \chi(wt-k) \left[ w \int_{k/w}^{(k+1)/w} f(u)\, du \right],
$$
for every $w>0$, such that $w t \in \Z$.

\noindent In particular, if $supp\, \chi \subset [1,+\infty)$ we have:
$$
(S_w f)(t)\ =\ \sum_{k/w\, <\, t} \chi(wt-k) \left[ w \int_{k/w}^{(k+1)/w} f(u)\, du \right],
$$
for every $w>0$.
\end{theorem}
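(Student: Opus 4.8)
The plan is to observe that, under the support hypothesis, all coefficients $\chi(wt-k)$ attached to the ``future'' nodes (those with $k/w\mau t$) vanish, so that the defining series of $(S_wf)(t)$ collapses onto the index set $\{k\in\Z:\ k/w<t\}$.

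First I would translate the support assumption into a statement about point values: since $\chi$ has compact support and $supp\,\chi\subset(0,+\infty)$, there are reals $0<a\miu b$ with $supp\,\chi\subseteq[a,b]$, hence $\chi(v)=0$ for every $v\miu 0$ — it is essential here that the support lies in the \emph{open} half-line, since that is what forces $\chi(0)=0$. Now fix $w>0$ with $wt\in\Z$ and consider, in
$$
(S_wf)(t)\ =\ \sum_{k\in\Z}\chi(wt-k)\left[w\int_{k/w}^{(k+1)/w}f(u)\,du\right],
$$
an index $k$ with $k/w\mau t$, i.e.\ $k\mau wt$. Either $k=wt$, and then $\chi(wt-k)=\chi(0)=0$; or $k>wt$, and then $wt-k<0$, so again $\chi(wt-k)=0$. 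Hence all these terms vanish and, discarding them, $(S_wf)(t)=\sum_{k/w<t}\chi(wt-k)\,[\,w\int_{k/w}^{(k+1)/w}f(u)\,du\,]$, which is the first assertion. I would add that, precisely because $wt\in\Z$, each interval $[k/w,(k+1)/w]$ surviving in this sum (those with $k\miu wt-1$) is contained in $(-\infty,t]$, so the reduced formula genuinely uses only past values of $f$; this is why the integrality restriction appears in this part.

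For the sharper assumption $supp\,\chi\subseteq[1,+\infty)$ one has $\chi(v)=0$ for all $v<1$, so $\chi(wt-k)\neq 0$ forces $wt-k\mau 1$, i.e.\ $k\miu wt-1$, whence $(k+1)/w\miu t$ and in particular $k/w<t$. Thus every non-vanishing term of the series is already indexed by some $k$ with $k/w<t$ (and its sampling interval lies in $(-\infty,t]$), now for every $w>0$ and with no integrality restriction; this gives the ``in particular'' statement.

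The argument is entirely elementary; the only point needing a little care is the borderline index $k=wt$ in the first assertion, whose coefficient is zero only because $0\notin supp\,\chi$. This is exactly where the hypothesis $supp\,\chi\subset(0,+\infty)$, with the \emph{open} interval, is genuinely used, and it is also what makes the restriction to $wt\in\Z$ the natural hypothesis there.
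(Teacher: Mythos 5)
Your proof is correct and follows essentially the same route as the paper's: the support hypothesis annihilates every coefficient $\chi(wt-k)$ with $k/w \geq t$, and the role of the restriction $wt\in\Z$ (respectively of $supp\,\chi\subset[1,+\infty)$) is, exactly as you note, to guarantee that the surviving integrals $\int_{k/w}^{(k+1)/w}f(u)\,du$ only involve values of $f$ up to time $t$. Your explicit handling of the borderline index $k=wt$ via $\chi(0)=0$ is if anything slightly more careful than the paper's, which simply states that $\chi(wt-k)=0$ whenever $wt-k\leq 0$.
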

\begin{proof}
Since $supp\, \chi \subset (0,+\infty)$, we have $\chi(wt-k) = 0$ for every $k \in \Z$ such that $wt-k \miu 0$, i.e., $k/w\, \mau\, t$. Moreover, if $w>0$ is such that $wt \in \Z$, it turns out that the last term of the series $(S_wf)(t)$ is: 
$$
\chi(1) \cdot \left[ w \int_{t-(1/w)}^{t} f(u)\, du \right];
$$
therefore it is clear that our operators exploit the values of the signal $f$ in the past with respect to the fixed time $t$.

\noindent Moreover, if $supp\, \chi \subset [1,+\infty)$ the proof follows as before, but we must observe that, if $wt \notin \Z$, we have $\lef wt \rig < wt < \lef wt \rig+1$, then:
$$
\sum_{k/w\, <\, t}\!\!\! \chi(wt-k) \left[ w \int_{k/w}^{(k+1)/w} f(u)\, du \right] = \!\!\!  \sum_{k\, \miu\, \lef wt \rig -1}\!\!\!  \chi(wt-k) \left[ w \int_{k/w}^{(k+1)/w} f(u)\, du \right]
$$
$$
+ \chi(wt- \lef wt \rig)\! \left[ w \int_{\lef wt \rig/w}^{(\lef wt \rig+1)/w}\!\!\! f(u)\, du \right] \!\! = \!\!\!  \sum_{k\, \miu\, \lef wt \rig -1}\!\!\!\!\!  \chi(wt-k) \left[ w \int_{k/w}^{(k+1)/w} f(u)\, du \right], 
$$
since $wt- \lef wt \rig<1$; then again the mean values are computed before the time $t$.
\end{proof}
Obviously, it is easy to see that, if the kernel $\chi$ has compact support, the above sampling Kantorovich series reduce to finite sums.


\section{Examples of kernels and particular cases} \label{sec4}

In this section, we present some concrete examples of kernels. First of all, we mention some well-known, band-limited kernels:
$$
\hskip-1.6cm \chi_1(x)\ :=\ \frac{1}{2}\, \left(\frac{\sin(\pi x /2)}{\pi x / 2}\right)^2, \hskip0.9cm x \in \R, \hskip0.9cm \mbox{(Fej\'er's kernel),}
$$
$$
\hskip0.33cm \chi_2(x)\ :=\ \frac{3}{2\pi}\, \frac{\sin(x /2)\, \sin(3 x /2)}{3 x^2 / 4}, \hskip0.3cm x \in \R, \hskip0.3cm \mbox{(de la Vall\'ee Poussin's kernel),}
$$
$$
\hskip-4.7cm \chi_3(x)\ :=\ \frac{\sin(\pi x /2)\, \sin(\pi x)}{\pi^2 x^2/2}, \hskip1.2cm x \in \R,
$$
see e.g. \cite{BUNE,BURIST1,BABUSTVI,COVI1,COVI2}.
For the above examples we have that condition $(\chi 1)$ is easily fulfilled, condition $(\chi 2)$ is satisfied in view of Theorem \ref{th_eq}, and finally condition $(\chi 3)$ is satisfied for every $\beta < 1$, see e.g. \cite{BUNE,BURIST1,BAMUVI,BABUSTVI}.

  For the sampling Kantorovich operators associated to these kernels, we can clearly speak of convergence in the standard sense (that one established in Theorem \ref{th1}), and we can speak of convergence at jump discontinuities only in the case considered in Theorem \ref{th2}. While, Theorem \ref{th-bis} and Theorem \ref{th3} cannot be applied, since in general $\chi_j(x) \neq 0$ for every $x \in (0,1)$, $j=1,2,3$.

   Further, concerning the order of approximation that can be achieved by the operators $S_w$, $w>0$, when the above kernels are employed, it is possible to observe that, since condition $(\chi 3)$ is satisfied only for every $\beta < 1$ (then $m_{\beta}(\chi_i)=+\infty$, for $\beta \mau 1$, $i=1,2,3$), Theorem \ref{th_bama} cannot be applied, while Theorem \ref{th_beta} holds. 

 As a general fact, we can observe that, it is impossible to construct non-trivial band-limited kernels, such that $\chi(x)=0$ for every $x \in (0,1)$, since they must be identically zero, due to the Paley-Wiener theorem, which establish the holomorphy of the function $\chi$ and the identity principle of the analytic functions.

  Now, since it is well-known that continuous functions cannot  be simultaneously duration and band limited, in order to obtain examples of kernels for which the corresponding sampling Kantorovich operators converge at jump discontinuity, the above remark suggests to consider only duration limited kernels, i.e., kernels with compact support.
\begin{theorem} \label{th_duration}
Let $\chi_a,\, \chi_b: \R \to \R$ be two continuous kernels such that, $supp\, \chi_a \subseteq [-a,a]$ and $supp\, \chi_b \subseteq [-b,b]$, with $a$ and $b$ positive. Moreover, let $\alpha \in \R$ be fixed. Then, setting:
$$
\chi(x)\ :=\ (1-\alpha)\, \chi_a(x-a-1)\ +\ \alpha\, \chi_b(x+b), \hskip1cm x \in \R,
$$
it turns out that $\chi$ is a kernel satisfying conditions $(\chi 1)$, $(\chi 2)$, and $(\chi 3)$. Furthermore, the corresponding sampling Kantorovich series $S_w f$, $w>0$, based upon $\chi$ satisfy (i) of Theorem \ref{th3} with the parameter $\alpha$, at any jump discontinuity $t \in \R$ of the given bounded signal $f: \R \to \R$.
\end{theorem}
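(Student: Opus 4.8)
The plan is to reduce the statement to Theorem~\ref{th3}. I would verify that the function $\chi$ defined in the theorem is a continuous kernel satisfying $(\chi 1)$, $(\chi 2)$, $(\chi 3)$ together with condition~(\ref{strong2}), and that $\Psi^-_{\chi}(x)=\alpha$ for every $x\in[0,1)$; then the implication $(ii)\Rightarrow(i)$ of Theorem~\ref{th3} immediately gives $\lim_{w\to+\infty}(S_wf)(t)=\alpha\,f(t+0)+(1-\alpha)\,f(t-0)$ at any jump discontinuity $t\in\R$ of the bounded signal $f$, which is precisely what is claimed.

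First I would check the kernel axioms. Since $\chi_a$ and $\chi_b$ are continuous, so is the finite linear combination $\chi$; each of its two summands is continuous with compact support, hence $\chi$ is bounded, lies in $L^1(\R)$, and is supported in the bounded set $[-2b,\,2a+1]$ --- this yields $(\chi 1)$, and since a bounded, compactly supported function has finite discrete absolute moments of every order, also $(\chi 3)$. For $(\chi 2)$, applying condition $(\chi 2)$ for $\chi_a$ at the point $u-a-1$ and for $\chi_b$ at the point $u+b$ gives
$$
\sum_{k\in\Z}\chi(u-k)\ =\ (1-\alpha)\sum_{k\in\Z}\chi_a(u-a-1-k)\ +\ \alpha\sum_{k\in\Z}\chi_b(u+b-k)\ =\ (1-\alpha)+\alpha\ =\ 1 ,
$$
for every $u\in\R$. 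Next, the map $x\mapsto\chi_a(x-a-1)$ is supported in $[1,2a+1]$ and $x\mapsto\chi_b(x+b)$ is supported in $[-2b,0]$, so $\chi$ vanishes on $(0,1)$; moreover $\chi(0)=(1-\alpha)\chi_a(-a-1)+\alpha\chi_b(b)=0$, because $-a-1\notin[-a,a]$ and $\chi_b(b)=0$ by continuity of $\chi_b$ and $supp\,\chi_b\subseteq[-b,b]$. Hence $\chi(x)=0$ for every $x\in[0,1)$, i.e.\ condition~(\ref{strong2}) holds.

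It remains to compute $\Psi^-_{\chi}(x)=\sum_{k>x}\chi(x-k)$ for $x\in[0,1)$, splitting according to the two summands of $\chi$. The $\chi_a$-part is absent: $\chi_a(x-k-a-1)\neq0$ forces $x-k\in[1,2a+1]$, hence $k\miu x-1<x$, so no admissible index satisfies $k>x$. For the $\chi_b$-part, $\chi_b(x-k+b)\neq0$ forces $x-k\in[-2b,0]$, hence $k\mau x$; thus every index contributing to the full sum $\sum_{k\in\Z}\chi_b(x-k+b)=1$ already has $k\mau x$, and the only index with $k=x$ (possible only for $x=0$, $k=0$) contributes $\chi_b(b)=0$. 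Therefore $\sum_{k>x}\chi_b(x-k+b)=\sum_{k\in\Z}\chi_b(x-k+b)=1$ and $\Psi^-_{\chi}(x)=\alpha$ for all $x\in[0,1)$, which is assertion~(ii) of Theorem~\ref{th3}. Since $\chi$ is continuous and satisfies~(\ref{strong2}), that theorem then yields (i) with parameter $\alpha$, completing the argument.

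The verifications of $(\chi 1)$--$(\chi 3)$ are routine; the only delicate point is the support bookkeeping --- one has to see that the $\chi_a$-bump is translated entirely to the right of $1$ and the $\chi_b$-bump entirely to the left of $0$, so that $\chi$ dies on $[0,1)$ while exactly the unit mass carried by the $\chi_b$-term survives in $\Psi^-_{\chi}$ on $[0,1)$ and is scaled by $\alpha$. That is the heart of the construction and the step I would check most carefully.
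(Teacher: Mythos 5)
Your proposal is correct, and it reaches the conclusion by a genuinely more elementary route than the paper. The paper verifies $(\chi 2)$ by computing $\widehat{\chi}(v)=(1-\alpha)e^{-iv(a+1)}\widehat{\chi_a}(v)+\alpha e^{ivb}\widehat{\chi_b}(v)$ and invoking the Fourier characterization of Theorem \ref{th_eq}, and then establishes convergence at jump discontinuities by checking condition (v) of Theorem \ref{th3}, i.e.\ by showing $\int_0^{+\infty}\chi(u)e^{-iu2\pi k}\,du$ equals $1-\alpha$ for $k=0$ and $0$ otherwise (again via the Fourier transform of the shifted $\chi_a$). You instead verify $(\chi 2)$ by the direct re-indexing $\sum_{k}\chi(u-k)=(1-\alpha)\sum_k\chi_a((u-a-1)-k)+\alpha\sum_k\chi_b((u+b)-k)=(1-\alpha)+\alpha$, and you check condition (ii) of Theorem \ref{th3} by computing $\Psi^-_{\chi}$ on $[0,1)$ from the support bookkeeping: the $\chi_a$-bump lives in $[1,2a+1]$ so it never meets indices $k>x$, while the whole unit mass of the $\chi_b$-bump, which lives in $[-2b,0]$, does, giving $\Psi^-_{\chi}(x)=\alpha$. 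Your treatment of the borderline index $k=0$ at $x=0$ (using continuity to get $\chi_b(b)=0$, hence also $\chi(0)=0$ so that condition (\ref{strong2}) holds on all of $[0,1)$, not just $(0,1)$) is actually slightly more careful than the paper, which asserts $\chi(x)=0$ on $[0,1)$ without comment. What the paper's Fourier route buys is uniformity with the rest of its machinery (Theorem \ref{th_eq} and the Poisson-summation proof of Theorem \ref{th3}); what your route buys is that it avoids Poisson summation entirely and makes transparent exactly where the mass $\alpha$ in $\Psi^-_{\chi}$ comes from. Both are complete proofs of the statement.
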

\begin{proof}
First of all, it is easy to see that $\chi$ satisfy conditions $(\chi 1)$ and $(\chi 3)$ (see e.g. \cite{BABUSTVI}). Moreover, since $\chi$ is defined by a suitable finite linear combination of two continuous kernels with compact support, it turns out that $\chi$ is itself continuous and with compact support. Further, its Fourier transform can be computed as follows:
$$
\widehat{\chi}(v)\ =\ (1-\alpha)\, e^{-i v\, (a+1)}\, \widehat{\chi_a}(v)\ +\ \alpha \,e^{i v\, b}\, \widehat{\chi_b}(v), \hskip0.5cm v \in \R.
$$
Now, by Theorem \ref{th_eq}, and since $\chi_a$ and $\chi_b$ satisfy condition $(\chi 2)$, we have $\widehat{\chi}(0)=1$, and $\widehat{\chi}(2\pi k) = 0$, for $k \in \Z \setminus\left\{ 0 \right\}$, then also $\chi$ satisfies $(\chi 2)$ and so it turns out to be a kernel function. In addition, it is easy to see that, for any $x \in [0,1)$, we have $\chi(x)=0$, i.e., $\chi$ satisfies condition $(\ref{strong})$. Finally, we can observe that:
$$
\hskip-1.2cm \int^{+\infty}_0 \chi(u)\, e^{-iu2\pi v} du\ =\ (1-\alpha) \int^{+\infty}_0 \chi_a(u-a-1)\, e^{-iu2\pi v}\, du
$$
$$
\hskip0.8cm =\ (1-\alpha)\, e^{-i v\, 2\pi\, (a+1)}\, \widehat{\chi_a}(2\pi v),
$$
and it results $\widehat{\chi_a}(2 \pi k) e^{-i 2 \pi k\, (a+1)}=1$, if $k=0$, and $\widehat{\chi_a}(2 \pi k) e^{-i 2 \pi k\, (a+1)}=0$, if $k \in \Z \setminus\left\{ 0 \right\}$, then we obtain that condition (v) of Theorem \ref{th3} is fulfilled. This completes the proof.
\end{proof}

   In order to construct examples of kernels as in Theorem \ref{th_duration}, we first recall the definition of the well-known central B-spline order $n \in\ \N^+$, given by:
$$
    M_n(x)\ :=\ \frac{1}{(n-1)!} \sum^n_{i=0}(-1)^i \binom{n}{i} 
       \left(\frac{n}{2} + x - i \right)^{n-1}_+,     \hskip0.5cm   x \in \R,
$$
where $(x)_+ := \max\left\{x,0 \right\}$ denotes ``the positive part'' of $x \in \R$. Clearly, central B-splines of order $n$ does not satisfy the condition $\chi(x)=0$, for every $x \in[0,1)$. 

  It is well-know that, any $M_n(x)$ has support $[-n/2, n/2]$; hence it is bounded and belonging to $L^1(\R)$. Thus condition $(\chi 1)$ holds, $(\chi 3)$ is easily satisfied for every positive values of $\beta$, and noting that:
$$
\widehat{M}_n(v)\ =\ \left( \frac{\sin v/2}{v/2} \right)^n, \hskip0.8cm v \in \R,
$$
we obtain from Theorem \ref{th_eq} that also condition $(\chi 2)$ is fulfilled. Now, for every central B-spline of order $n \in \N$, we define for each $\alpha \in \R$, the following kernels:
$$
\chi_n(x)\ :=\ (1-\alpha)\, M_n(x-n-1)\, +\, \alpha\, M_n(x+n), \hskip0.8cm x \in \R.
$$
By means of the kernels $\chi_n$, the corresponding sampling Kantorovich series $S_w f$, $w>0$, satisfy (i) of Theorem \ref{th3} with the parameter $\alpha$, at any jump discontinuity $t \in \R$ of a given bounded signal $f: \R \to \R$.

  Until now, we showed only examples of continuous kernels; in what follows we introduce a discontinuous kernel. Let
$$
C_2(x)\ :=\ \frac{1}{2}\left[ M_2(x+2)+M_2(x-2) \right]\ =\ \left\{ 
\begin{array}{l}
(|x|-1)/2, \hskip0.4cm 1 \miu x < 2, \\
(3-|x|)/2, \hskip0.4cm 2 \miu x < 3, \\
0, \hskip2cm \mbox{elsewhere},
\end{array}
\right.
$$
a continuous kernel constructed by a procedure similar to that one described in the proof of Theorem \ref{th_duration}. Clearly, the sampling Kantorovich operators based upon $C_2$ satisfy (i) of Theorem \ref{th3} with $\alpha = 1/2$. Now, we introduce the following step function:
$$
S(x)\ :=\ \left\{ 
\begin{array}{l}
1/2, \hskip1.2cm |x|=1, \\
-1/2, \hskip0.9cm |x|=2 \\
0, \hskip1.61cm \mbox{elsewhere}.
\end{array}
\right.
$$
It is easy to check that $\displaystyle \sum_{k \in \Z} S(x-k) =0$ and $\Psi^-_S(x)=0$, for every $x \in [0,1)$; now defining:
$$
D_2(x)\ :=\ C_2(x)\, +\, S(x), \hskip1cm x \in \R,
$$ 
we obtain that the discontinuous function $D_2(x)$ is again a kernel satisfying $(\chi 1)$, $(\chi 2)$, and $(\chi 3)$, $D_2(x)=0$ for every $x \in [0,1)$, and it satisfies condition (ii) of Theorem \ref{th3} (in the not necessarily continuous case). Hence, the corresponding sampling Kantorovich series based upon $D_2$ converge at jump discontinuity with $\alpha=1/2$. 

 Note that, while the previous examples of kernels with compact support have $m_{\beta}(\chi)<+\infty$, for every $\beta \mau 0$, the kernels $\chi_1$, $\chi_2$ and $\chi_3$ have $m_{\beta}(\chi_i)=+\infty$, for $\beta \mau 1$, $i=1,2,3$. In the latter case, Theorem \ref{th_bama} cannot be applied and Theorem \ref{th_beta} becomes useful in order to estimate the aliasing errors for the sampling Kantorovich operators.

   In the following theorem, we show that, in general, it is possible to give a condition on the kernels which ensures that $(\chi 3)$ holds for $0\miu \beta <\nu$, for some $\nu <1$, and $m_\beta(\chi)=+\infty$, for $\nu < \beta \miu 1$. 
\begin{theorem} \label{th4.3}
Let $\chi: \R \to \R$ be a function such that the following condition holds for suitable constants $0< C_1 \miu C_2$:
$$
\frac{C_1}{|u|^{\gamma}}\ \miu\ \chi(u)\ \miu\ \frac{C_2}{|u|^{\gamma}},\ \hskip0.5cm for\ every\ \hskip0.5cm |u| > M,
$$
for some $1< \gamma \miu 2$, and $M>0$. Then:
$$
m_{\beta}(\chi)\ =\ \left\{
\begin{array}{l}
+\infty, \hskip1.4cm \gamma-1\ \miu\ \beta\ \miu 1, \\
<  +\infty,  \hskip1cm  0\ \miu\ \beta\ <\ \gamma - 1.
\end{array}
\right.
$$
\end{theorem}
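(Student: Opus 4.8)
The plan is to reduce everything to the elementary dichotomy for the $p$-series $\sum_{k} k^{\beta-\gamma}$, which converges if and only if $\gamma-\beta>1$, since the hypothesis $C_1/|u|^{\gamma}\miu\chi(u)\miu C_2/|u|^{\gamma}$ pins down the size of $\chi(u-k)$ (and in particular its sign) whenever $|u-k|>M$. Note first that $1<\gamma\miu 2$ forces $0\miu\gamma-1\miu 1$, so the two ranges $\gamma-1\miu\beta\miu 1$ and $0\miu\beta<\gamma-1$ partition the relevant interval of exponents. I would treat the two regimes separately.

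\textbf{Divergence regime $\gamma-1\miu\beta\miu1$.} Since $m_{\beta}(\chi)$ is a supremum over $u\in\R$, it suffices to exhibit one point at which the defining series diverges; I would take $u=0$. Discarding the finitely many indices with $|k|\miu M$ (all retained terms are nonnegative, as $\chi(-k)>0$ there), I would estimate
$$
\sum_{k\in\Z}|\chi(-k)|\,|k|^{\beta}\ \mau\ \sum_{|k|>M}\frac{C_1}{|k|^{\gamma}}\,|k|^{\beta}\ =\ 2\,C_1\sum_{k>M}k^{\beta-\gamma}.
$$
Since $\beta-\gamma\mau-1$ is exactly the condition $\beta\mau\gamma-1$, the last series diverges (the endpoint $\beta=\gamma-1$ being the harmonic series), and hence $m_{\beta}(\chi)=+\infty$.

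\textbf{Convergence regime $0\miu\beta<\gamma-1$.} I would first note that $u\mapsto\sum_{k\in\Z}|\chi(u-k)|\,|u-k|^{\beta}$ is $1$-periodic (reindex $k\mapsto k+1$), so it is enough to bound it uniformly for $u\in[0,1)$. For such $u$ I would split the sum into a \emph{near part}, over the finitely many indices $k$ with $|u-k|\miu M$, and a \emph{far part}, over the indices with $|u-k|>M$. In the near part the arguments $u-k$ range over a fixed compact interval, so — using that $\chi$ is bounded on compact sets, which is automatic for the kernels under consideration — this contributes at most a constant times $M^{\beta}$. For the far part the hypothesis gives
$$
\sum_{|u-k|>M}|\chi(u-k)|\,|u-k|^{\beta}\ \miu\ C_2\sum_{|u-k|>M}|u-k|^{\beta-\gamma},
$$
and since $|u-k|\mau|k|-1$ for $u\in[0,1)$, the right-hand side is dominated by $2\,C_2\sum_{m\mau1}m^{\beta-\gamma}$ together with finitely many bounded terms; this converges because $\beta-\gamma<-1$. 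Taking the supremum over $u\in[0,1)$ yields $m_{\beta}(\chi)<+\infty$.

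The only step needing real care is the convergence regime: one must make the estimate \emph{uniform} in $u$ (handled by the periodicity reduction to $u\in[0,1)$) and dispose of the finitely many indices near $u$ where the hypothesis says nothing about $\chi$ (handled by local boundedness). The divergence regime is immediate from a single well-chosen point, and neither part is deep — the content is entirely the comparison with $\sum_k k^{\beta-\gamma}$.
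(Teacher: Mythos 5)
Your proof is correct and follows essentially the same route as the paper: in both regimes the matter is reduced to the dichotomy for $\sum_k |u-k|^{\beta-\gamma}$, with the lower bound $C_1|u|^{-\gamma}$ giving divergence at a single point and the upper bound $C_2|u|^{-\gamma}$, after a near/far split at $|u-k|=M$, giving a uniform bound. The only cosmetic difference is that the paper absorbs the near part into $M^{\beta}m_0(\chi)$ (justifying $m_0(\chi)<+\infty$ from the decay of $\chi$), whereas you invoke local boundedness of $\chi$ directly and make the uniformity explicit via $1$-periodicity — an equally valid, if anything slightly more careful, bookkeeping of the same estimate.
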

\begin{proof}
Let $\gamma-1 \miu \beta \miu 1$, and $u \in \R$ be fixed. We can write:
$$
m_\beta(\chi)\ \mau\ \sum_{k \in \Z}|\chi(u-k)|\, |u-k|^{\beta}\ \mau\ C_1 \sum_{|u-k|>M} |u-k|^{\beta-\gamma}\ =\ +\infty,
$$
since $\gamma-\beta \miu 1$. While, for $0 \miu\ \beta< \gamma - 1$, and every $u \in \R$ we have:
$$
\sum_{k \in \Z}|\chi(u-k)|\, |u-k|^{\beta}\ \miu\ \left\{ \sum_{|u-k| \miu M} + \sum_{|u-k|>M}  \right\}|\chi(u-k)|\, |u-k|^{\beta}
$$
$$
\miu\  M^\beta m_0(\chi)\ +\ C_2\, \sum_{|u-k|>M} |u-k|^{\beta-\gamma}\ < \ +\infty,
$$
where $m_0(\chi) <+\infty$ since $\chi(u)={\cal O}(|u|^{-\gamma})$, as $|u| \to +\infty$ with $\gamma>1$, and since the series $\sum_{|u-k|>M} |u-k|^{\beta-\gamma}$, with $\gamma - \beta >1$, is uniformly convergent for every $u \in \R$. Then the proof follows.
\end{proof}

  Examples of kernels as in Theorem \ref{th4.3}, which satisfy $(\chi 1)$, $(\chi 2)$, and $(\chi 3)$ with $0 <\beta<1$, can be constructed by using certain finite linear combination of sigmoidal functions, see e.g. \cite{COSP1,COSP2,CO1,COSP4,COSP3,CO2,COVI7}.

  We recall that, a function $\sigma: \R \to \R$ is called a sigmoidal function if it satisfies the following:
$$
\lim_{x\to-\infty}\sigma(x)=0, \hskip0.8cm \mbox{and} \hskip0.8cm \lim_{x\to+\infty}\sigma(x)=1,
$$
see e.g., \cite{CY,CO1,COSP3,CO2}.
In particular, we consider the following sigmoidal functions, first introduced in \cite{COVI7}:
\be
\sigma_{\gamma}(x)\ :=\ 
\left\{
\begin{array}{l}
\disp \frac{1}{|x|^{\gamma}+2}, \hskip2.7cm x<- 2^{1/\gamma},\\
\\
\disp 2^{-(1/\gamma)-2}x+(1/2), \hskip0.6cm -  2^{1/\gamma} \miu x \miu 2^{1/\gamma},\\ 
\\
\disp \frac{x^{\gamma}+1}{x^{\gamma}+2}, \hskip2.9cm x > 2^{1/\gamma},
\end{array}
\right.
\ee
with $1 < \gamma \miu 2$. Now, we can define the positive functions:
$$
\phi_\gamma(u)\ :=\ \frac{1}{2}\, [\sigma_{\gamma}(u+1)-\sigma_{\gamma}(u-1)], \hskip0.8cm u \in \R,
$$
which satisfies the assumption of Theorem \ref{th4.3} for $1 < \gamma \miu 2$, then it turns out that:
$$
m_{\beta}(\phi_\gamma)\ =\ \left\{
\begin{array}{l}
+\infty, \hskip1.4cm \gamma-1\ \miu\ \beta\ \miu 1, \\
<  +\infty,  \hskip1cm  0\ \miu\ \beta\ <\ \gamma - 1,
\end{array}
\right.
$$
i.e., $\phi_\gamma$ satisfies $(\chi 3)$ for $0 \miu \beta < \gamma - 1<1$. Moreover, in \cite{COVI7} (or in \cite{COSP1,COSP2,COSP4}) it is showed that $\phi_\gamma$ satisfies also conditions $(\chi 1)$ and $(\chi 2)$.


\section{Final remarks and conclusions} \label{sec5}

Some necessary and sufficient conditions for the convergence at jump and removable discontinuities for the families of sampling Kantorovich operators have been established. A crucial role is played by suitable kernels a class of which
can be easily obtained by a certain finite linear combination of duration-limited kernels. 

   We note that, in the present setting, the representation formula of Lemma \ref{lemma1} is more delicate to achieve in comparison with the analogous one in \cite{BURIST1}, due to the presence of the average instead of the sample values $f(k/w)$, $k \in \Z$, $w>0$.

   Finally, since in some previous papers (see e.g. \cite{COVI1,COVI2,CLCOMIVI1,CLCOMIVI2}) it has been proved that the sampling Kantorovich operators are suitable for image reconstruction and enhancement, and for this aim it is important to study the behavior of the image in the point of discontinuity, the present study can be useful for its extension to the multivariate frame.

\vskip0.2cm

\section*{Acknowledgment}

The authors are members of the Gruppo  
Nazionale per l'Analisi Matematica, la Probabilità e le loro  
Applicazioni (GNAMPA) of the Istituto Nazionale di Alta Matematica (INdAM).

\noindent The first and the second author of the paper have been partially supported within the GNAMPA-INdAM Project ``Metodi di approssimazione e applicazioni al Signal e Image Processing''; project number: U2015/000396 12/03/2015.

\vskip0.1cm
%
%


\begin{thebibliography}{99}
\small
\bibitem{ANVI} L. Angeloni and G. Vinti, Rate of approximation for nonlinear integral operators with applications to signal processing, Differential Integral Equations, 18 (8) (2005) 855-890.

\bibitem{ANVI1} L. Angeloni and G. Vinti, Convergence and rate of approximation for linear integral operators in $BV^{\varphi}$-spaces in multidimensional setting, J. Math. Anal. Appl. 349  (2) (2009) 317-334.

\bibitem{ANVI2} L. Angeloni and G. Vinti, Approximation with respect to Goffman-Serrin variation by means of non-convolution type integral operators, Numerical Functional Analysis and Optimization 31 (5)  (2010), 519-548.

\bibitem{BABUSTVI0} C. Bardaro, P.L. Butzer, R.L. Stens and G. Vinti, Approximation of the Whittaker Sampling Series in terms of an Average Modulus of Smoothness covering Discontinuous Signals., J. Math. Anal. Appl., 316 (2006) 269-306.

\bibitem{BABUSTVI} C. Bardaro, P.L. Butzer, R.L. Stens and G. Vinti, Kantorovich-type generalized sampling series in the setting of Orlicz spaces, Sampling Theory in Signal and Image Processing, 6 (1) (2007) 29-52.

\bibitem{BABUSTVI2} C. Bardaro, P.L. Butzer, R.L. Stens and G. Vinti, Prediction by samples from the past with error estimates covering discontinuous signals, IEEE Trans. Inform. Theory, 56 (2010) (1), 614-633. 

\bibitem{BAMA0} C. Bardaro and I. Mantellini, Modular approximation by sequences of nonlinear integral operators in Musielak-Orlicz spaces, Atti Sem. Mat. Fis. Univ. Modena, special issue dedicated to Professor Calogero Vinti, 46 (1998) 403-425.

\bibitem{BAMA} C. Bardaro and I. Mantellini, On convergence properties for a class of Kantorovich discrete operators, Numer. Funct. Anal. Optim., 33 (2012) (4) 374-396.

\bibitem{BAMUVI} C. Bardaro, J. Musielak and G. Vinti, Nonlinear Integral Operators and Applications, New York, Berlin: De Gruyter Series in Nonlinear Analysis and Applications 9, 2003.

\bibitem{BAKAVI} C. Bardaro, H. Karsli, G. Vinti, On pointwise convergence of linear integral operators with homogeneous kernels, Integral Transforms and Special Functions 19 (6) (2008), 429-439.


\bibitem{BAVI0} C. Bardaro and G. Vinti, Some inclusion theorems for Orlicz and Musielak-Orlicz type spaces, Annali di Matematica Pura e Applicata, 168 (1995) 189-203.

\bibitem{BU1} P.L. Butzer, A survey of the Whittaker-Shannon sampling theorem and some of its extensions,~J. Math. Res. Exposition, 3 (1983) 185-212.

\bibitem{BUNE} P.L. Butzer and R.J. Nessel, Fourier Analysis and Approximation I, Academic Press, New York-London, 1971.

\bibitem{BURIST1} P.L. Butzer, S. Ries and R.L. Stens,  Approximation of continuous and discontinuous functions by generalized sampling series, J. Approx. Theory 50 (1987) 25-39.

\bibitem{BUSPST} P.L. Butzer, W. Splettst$\ddot{o}$ \ss er and R.L. Stens, The sampling theorem and linear prediction in signal analysis, Jahresber. Deutsch. Math.-Verein., 90 (1988) 1-70.

\bibitem{BUST2} P.L. Butzer and R.L. Stens, Linear prediction by samples from the past. In: Advanced Topics in Shannon Sampling and Interpolation Theory (editor R.J. Marks II), New York: Springer-Verlag 1993.

\bibitem{CLCOMIVI1} F. Cluni, D. Costarelli, A.M. Minotti and G. Vinti, Enhancement of thermographic images as tool for structural analysis in earthquake engineering, NDT \& E International, 70 (2015), 60-72.

\bibitem{CLCOMIVI2} F. Cluni, D. Costarelli, A.M. Minotti and G. Vinti, Applications of sampling Kantorovich operators to thermographic images for seismic engineering, J. Comput. Anal. Appl., 19 (4) (2015) 602-617.

\bibitem{CO1} D. Costarelli, Interpolation by neural network operators activated by ramp functions, Journal of Mathematical Analysis and Application, 419 (2014) 574-582.

\bibitem{CO2} D. Costarelli, Neural network operators: constructive interpolation of multivariate functions, Neural Networks 67 (2015) 28-36. 

\bibitem{COSP1} D. Costarelli and R. Spigler, Approximation results for neural network operators activated by sigmoidal functions, Neural Networks 44 (2013) 101-106.

\bibitem{COSP2} D. Costarelli and R. Spigler, Multivariate neural network operators with sigmoidal activation functions, Neural Networks 48 (2013) 72-77.

\bibitem{COSP3} D. Costarelli and R. Spigler, Convergence of a family of neural network operators of the Kantorovich type, J. Approx. Theory, 185 (2014) 80-90.

\bibitem{COSP4} D. Costarelli and R. Spigler, Approximation by series of sigmoidal functions with applications to neural networks, Annali di Matematica Pura Applicata, 194 (1) (2015), 289-306, DOI: 10.1007/s10231-013-0378-y.

\bibitem{COVI1} D. Costarelli and G. Vinti, Approximation by multivariate generalized sampling Kantorovich operators in the setting of Orlicz spaces, Bollettino U.M.I. (9) \textbf{IV} (2011), 445-468.

\bibitem{COVI2} D. Costarelli and G. Vinti, Approximation by nonlinear multivariate sampling-Kantorovich type operators and applications to Image Processing, Numerical Functional Analysis and Optimization 34 (8) (2013), 819-844.

\bibitem{COVI3} D. Costarelli and G. Vinti, Order of approximation for sampling Kantorovich operators, J. Int. Eq. Appl., 26 (3) (2014) 345-368.

\bibitem{COVI4} D. Costarelli and G. Vinti, Rate of approximation for multivariate sampling Kantorovich operators on some functions spaces, J. Int. Eq. Appl., 26 (4) (2014) 455-481.

\bibitem{COVI5} D. Costarelli and G. Vinti, Degree of approximation for nonlinear multivariate sampling Kantorovich operators on some functions spaces, Numer. Funct. Anal. Optim., 36 (8) (2015) 964-990.

\bibitem{COVI7} D. Costarelli and G. Vinti, Max-product neural network and quasi-interpolation operators activated by sigmoidal functions, submitted (2015).

\bibitem{CY} G. Cybenko, Approximation by superpositions of a sigmoidal function, Math. Control Signals Systems 2 (1989) 303-314.

\bibitem{HIG1} J.R. Higgins, Five short stories about the cardinal series, Bull. Amer. Math. Soc., 12 (1985), 45-89.

\bibitem{HIG2} J.R. Higgins, Sampling theory in Fourier and Signal Analysis: Foundations, Oxford: Oxford Univ. Press 1996.

\bibitem{HIST} J.R. Higgins and R.L. Stens, Sampling theory in Fourier and Signal Analysis: advanced topics, Oxford: Oxford Science Publications, Oxford Univ. Press, 1999.

\bibitem{MU1} J. Musielak, Orlicz Spaces and Modular Spaces, Springer-Verlag, Lecture Notes in Math. 1034, 1983.

\bibitem{MUORL} J. Musielak and W. Orlicz, On modular spaces, Studia Math. 28 (1959), 49-65.

\bibitem{RIST} S. Ries and R.L. Stens, Approximation by generalized sampling series. In: Constructive Theory of Functions'84, Sofia, 1984, pp. 746-756.

\bibitem{SHAN} C.E. Shannon, Communication in the presence of noise, Proc. I.R.E., 37 (1949), 10-21.

\bibitem{VEVI1} F. Ventriglia and G. Vinti, A unified approach for the convergence of nonlinear Kantorovich type operators, Communications on Applied Nonlinear Analysis, 21 (2) (2014), 45-74.

\bibitem{VIZA1} G. Vinti and L. Zampogni, Approximation by means of nonlinear Kantorovich sampling type operators in Orlicz spaces, J. Approx. Theory, 161 (2009) 511-528.

\bibitem{VIZA2} G. Vinti and L. Zampogni, A unifying approach to convergence of linear sampling type operators in Orlicz spaces, Adv. Differential Equations, 16 (2011) (5-6) 573-600.

\bibitem{VIZA3} G. Vinti and L. Zampogni, Approximation results for a general class of Kantorovich
type operators, Advanced Nonlinear Studies, 14 (2014) 991-1011.

\end{thebibliography}
\end{document}